\newtheorem{thm}{Theorem}[section]
\newtheorem{lemma}[thm]{Lemma}
\newtheorem{cor}[thm]{Corollary}
\newtheorem{propose}[thm]{Proposition}
\newtheorem{notations}[thm]{Notations}
\newtheorem{rmk}[thm]{Remark}
\newtheorem{defn}[thm]{Definition}
\newtheorem{ex}[thm]{Example}
\numberwithin{equation}{section}
\newcommand{\longto}{\longrightarrow}
\newcommand{\sF}{{\mathcal F}} 
\newcommand{\sL}{{\mathcal L}} 
\newcommand{\sM}{{\mathcal M}}
\newcommand{\sN}{{\mathcal N}} 
\newcommand{\sO}{{\mathcal O}}
\newcommand{\sP}{{\mathcal P}}
\newcommand{\F}{{\mathbb F}}
\newcommand{\M}{{\mathbb M}}
\newcommand{\N}{{\mathbb N}}
\renewcommand{\P}{{\mathbb P}} 
\newcommand{\Q}{{\mathbb Q}}
\newcommand{\R}{{\mathbb R}} 
\newcommand{\Z}{{\mathbb Z}}
\begin{document}
	\title{$\beta$-density function on the class group of projective toric varieties}
	
	\author{Mandira Mondal}
	\address{M. Mondal, 
		Chennai Mathematical Institute, H1, SIPCOT IT Park,
		Siruseri, Kelambakkam 603103, India}
	\email{mandiram@cmi.ac.in}
	
	\date{\today}

	\keywords{ coefficients of Hilbert-Kunz function; Hilbert-Kunz density
		function; $\beta$-density function; projective toric variety; height one monomial prime ideal; convex geometry}
	\subjclass[2010]{ 13D40, 13H15, 14M25, 52B20}
	\thanks{The author is supported by NBHM postdoctoral fellowship, India and partially by an Infosys Foundation
		fellowship.}
	
	\begin{abstract}
		We prove the existence of a compactly supported, continuous (except at finitely many points)
		function $g_{I, {\bf m}}: [0, \infty)\longto \R$ for all monomial prime ideals $I$ of $R$ of height one
		where $(R, {\bf m})$ is the homogeneous coordinate ring associated to a projectively normal
		toric pair $(X, D)$, such that
		$$\int_{0}^{\infty}g_{I, {\bf m}}(\lambda)d\lambda=\beta(I, {\bf m}),$$
		where $\beta(I, {\bf m})$ is the second coefficient of the Hilbert-Kunz function of $I$ with respect to
		the maximal ideal ${\bf m}$, as proved by Huneke-McDermott-Monsky \cite{HMM04}. Using the above
		result, for standard graded normal affine monoid rings we give a complete
		description of the class map $\tau_{{\bf m}}:\text{Cl}(R)\longto \R$ introduced in \cite{HMM04} to prove
		the existence of the second coefficient of the Hilbert-Kunz function. Moreover, we show the function $g_{I, {\bf m}}$
		is multiplicative on Segre products with the expression involving the first two coefficients of the Hilbert plolynomial
		of the rings and the ideals.
	\end{abstract}
	
	\maketitle
	
	\section{Introduction}
	\label{intro}
	Let $R$ be a Noetherian ring of prime characteristic $p > 0$ and of dimension $d$ and let $\eta\subseteq R$ be an ideal of finite colength. Let $M$ be a finitely generated $R$-module. The Hilbert-Kunz function of $M$ with respect to the ideal $\eta$ is defined as
	$$\mbox{HK}(M, \eta)(n):=\ell(M/\eta^{[q]}M)$$
	where $q=p^n$, the ideal $\eta^{[q]} =n$-th Frobenius power of the ideal $\eta$ and $\ell(M/\eta^{[q]}M)$ denotes the length of the $R$-module $M/\eta^{[q]}M$. The limit $$ \lim_{n\to\infty} \frac{1}{q^{d}}\ell(M/\eta^{[q]}M)=:e_{HK}(M,\eta)$$
	exists \cite{Mon83} and is called the Hilbert-Kunz multiplicity of $M$ with respect to the ideal $\eta$. In addition to the above conditions, when $R$ is an excellent normal domain, Huneke, McDermott and Monsky \cite[Theorem 1]{HMM04} have shown the existence of a real number $\beta(M, \eta)$ such that 
	$$\mbox{HK}(M, \eta)(n)=e_{HK}(M,\eta)q^d+\beta(M, \eta)q^{d-1}+O(q^{d-2}).$$
	In the course of the proof of the above result, they have asserted the existence of a homomorphism
	$\tau_\eta:\text{Cl}(R)\longto \R$ on the class group of $R$, $\text{Cl}(R)$, the quotient of the free abelian group on the
	height one prime ideals of $R$ by the subgroup of principal divisors. 
	Let $M$ be a finitely generated $R$-module. Then $M$ admits a finite filtration $0\rightarrow\cdots M_{i-1}\rightarrow M_{i}\cdots \rightarrow M$ such that $M_i/M_{i-1}$ is isomorphic to $R/P_i$ with $P_i$ prime ideals in $R$. Consider the divisor $-\sum P_i$ where the sum is taken over all $P_i$
	appearing in the quotients $M_i/M_{i-1}$ that are of height one. The image of this divisor in the class group of $R$ is independent of the filtration chosen for $M$, and is defined as the class of $M$, denoted by $c(M)$.
	Let $M$ be a finitely generated torsion-free $R$-module. By \cite[Corollary 1.10]{HMM04}, the limit
	$$\tau_{\eta}(M):=\lim_{q\to \infty}\frac{1}{q^{d-1}}\left[\ell(M/\eta^{[q]}M)-\text{rank}(M)\ell(R/\eta^{[q]})\right]$$
	is well defined and depends only on $c(M)$, the class of $M$ in $\text{Cl}(R)$. When $R$ is $F$-finite,
	$$\beta(M, \eta)=\tau_\eta(c(M))-\frac{\text{rank}(M)}{p^d-p^{d-1}}\tau_\eta(c(^{1}R)),$$
	where $^{1}R$ denotes the finitely generated module $R$ over itself with the action given by the first Frobenius homomorphism. 
	
	The result of Huneke--McDermott--Monsky was generalised  by 
	Hochster-Yao in \cite{HY09} from normal rings to the equidimensional reduced rings such that the singular locus is given by an ideal of
	height at least $2$.
	Chan and Kurano have proved the result for reduced rings regular in codimension one \cite{CK16}. 
	For a normal affine monoid $R$, Bruns in \cite{Bru05} have proved that
	HK function is a quasi polynomial and gave another proof of the 
	existence of the constant second coefficient $\beta(R, {\bf m})$.

	In order to study $e_{HK}(M, \eta)$,  
	when  $R$ is a standard graded ring ($\dim(R)\geq 2)$ with a homogeneous ideal $\eta$ of finite colength
	and $M$ is a finitely generated non-negatively graded $R$-module, Trivedi has defined the notion of 
	Hilbert-Kunz density function, and obtained its relation with the 
	\textnormal{\mbox{HK}} multiplicity \cite[Theorem 1.1]{Tri18}: 
	{\it The sequence of functions $\{f_n(M,\eta):[0,\infty)\longto 
		\R_{\geq 0}\}_n$
		given by 
		$$f_n(M, \eta)(\lambda) = \frac{1}{q^{d-1}}\ell(M/\eta^{[q]}M)_{\lfloor q\lambda\rfloor}$$
		converges uniformly to a compactly supported continuous function 
		$f_{M,\eta}:[0,\infty) \to 
		\R_{\geq 0}$, such that 
		$$e_{HK}(M, \eta)=\int_0^\infty f_{M, \eta}(\lambda)d\lambda.$$
	}
	
	\noindent We call $f_{M, \eta}$ the Hilbert-Kunz density function or the $\mbox{HK}$ density function of $M$
	with respect to the ideal $\eta$.
	The existence of a  uniformly converging sequence  makes 
	the density function a more refined and useful invariant (compared to $e_{HK}$) 
	in the graded situation (\cite{Tri17}, \cite{Tri19}, \cite{TW20}). Applying  the theory of \mbox{HK} density functions  to  projective toric varieties 
	(denoted here as toric pairs $(X,D)$), one obtains 
	\cite[Theorem~6.3]{MT19} an algebraic  characterization of the tiling property of 
	the associated polytopes $P_D$ (in the ambient lattice) 
	in terms of the asymptotic growth of $e_{HK}$, i.e., $e_{HK}(R, {\bf m}^k)$ relative to $e_{0}(R, {\bf m}^k)$ (the Hilbert Samuel multiplicity of $R$ with respect to the ideal ${\bf m}^k$) as $k\to \infty$.

	Let $(X, D)$ be a toric pair, i.e., $X$ is a projective toric variety over an algebraically closed field of characteristic $p>0$,
	with a very ample $T$-Cartier divisor $D$ and let $R$ be the homogeneous coordinate ring of
	$X$, with respect to the embedding given by the very ample
	line bundle $\mathcal{O}_X(D)$, with homogeneous maximal ideal $\textbf{m}$.
	There is a convex 
	lattice polytope 
	$P_D$ as in (\ref{eq2.1}), a convex polyhedral cone $C_D$  and a bounded body 
	$\sP_D$ as in (\ref{eq2.3}), associated to a toric pair $(X, D)$. Such a bounded body was introduced by 
	K. Eto (see \cite{Eto02}), in order to study the HK multiplicity for a toric ring, and  
	he proved that $e_{HK}$ is the relative volume of such a body (we use the 
	notation $\textnormal{\mbox{rVol}}_{n}$ to denote the ${n}$-dimensional 
	relative volume function). 
	In \cite{MT19}, it was  shown that the HK density function at $\lambda $ is 
	the relative volume of the $\{z=\lambda\}$ slice of $\sP_D$. 
	
	Similar to the $\mbox{HK}$ density function, for a `projectively normal' toric pair $(X, D)$ (i.e., $(X, D)$ is a toric pair such that the coordinate ring $R$ is an integrally closed domain), it was shown in \cite{MT20} that
	there exists a {\it$\beta$-density function}
	$g_{R, {\bf m}}: [0,\infty)\to \R$
	which similarly refines the $\beta$-invariant of \cite{HMM04}. 
	More precisely, it was shown that {\it the sequence of functions $\{g_n(R, {\bf m}):[0,\infty)\longto\R\}_{n\in\N}$, given by
		\begin{equation}\label{*g}
			g_n(R, {\bf m})(\lambda)=\frac{1}{q^{d-2}}\left(
			\ell(R/\textbf{m}^{[q]})_{\lfloor q\lambda \rfloor}
			-{f_{R, {\bf m}}}(\lfloor q\lambda\rfloor/q) q^{d-1}\right),\end{equation}
		converges uniformly to a compactly supported continuous (except possibly on a finite set)  function $g_{R, {\bf m}}$ such that
		$\int_{0}^{\infty}g_{R, {\bf m}}(x)dx=\beta(R, {\bf m})$}. It was shown that the $\beta$-density function $g_{R, {\bf m}}$ at $\lambda$ is expressible in terms of the 
	relative volume of the $\{z=\lambda\}$ slice of the boundary, $\partial(\sP_D)$, 
	of $\sP_D$ (stated in this paper as Theorem \ref{thbeta}).
	
	In regard to Theorem \ref{thbeta}, one would like to ask whether there exists the notion of $\beta$-density function (with respect to the homogeneous maximal ideal ${\bf m}$)
	for all finitely generated non-negatively graded $R$-modules $M$ which refines the invariant $\beta(M, {\bf m})$. In this paper we answer this question affirmatively for monomial prime ideals of $R$ of height one. Using this result, we define a `$\tau$-density function'  $\alpha_{I, \bf m}:[0, \infty)\longto \R$ for these ideals which describe the value of the function $\tau_{\bf m}:[0, \infty)\longto \R$ for these ideals via a simple integral formula, i.e., $\int_{0}^{\infty}\alpha_{I, {\bf m}}(x)dx=\tau_{\bf m}(I)$. This gives a complete description of the homomorphism $\tau=\tau_{\bf m}$ since the class group of $R$ is generated by its monomial prime ideals of height one. 
	
	Let $I=p_{F}$ be a monomial prime ideal of height one, associated to a facet $F$ of $P_D$. 
	To prove the existence of the $\beta$-density function for $I$ with respect to the homogeneous maximal ideal ${\bf m}$, consider
	the sequence of functions $\{g_n{(I, {\bf m})}:[0, \infty)\longto\R\}_n$, given by 
	$$g_n{(I, {\bf m})}(\lambda)=\frac{1}{q^{d-2}}\left(\ell(I/{\bf m}^{[q]}I)_{\lfloor\lambda q\rfloor}
	-f_{I, {\bf m}}(\lfloor\lambda q\rfloor/q)q^{d-1}\right).$$
	Let $\sigma_F:\R^d\longto\R$ be the support function for the facet of $C_D$ corresponding to the facet $F$ of $P_D$ and let $H_{F, \mu}=\{x\in\R^d\mid \sigma_F(x)=\mu\}$ for all $\mu\in \Q_{\geq 0}$. Also, let $\mu_{D, F}=\{\mu\in \Q_{>0}\mid u\in H_{F, \mu}\text{ for some }u\in P_D\cap \sM\}$, where $\sM$ is the ambient lattice associated to the torus $T\subset X$ (see Section 2).
	We prove the following main result. 
	\begin{thm}\label{thidealbeta} Let $(X, D)$ be a projectively normal toric pair of dimension $\geq 2$ and let $(R, {\bf m})$ be the associated homogeneous coordinate ring. Let $I=p_F$ be a monomial prime ideal of height one, associated to a facet $F$ of the polytope $P_D$. 
		There exists a finite set $v_{\sP_D, F}\subset [0, \infty)$ such that for any compact set $V\subset [0, \infty)\setminus v_{\sP_D, F}$, the sequence of functions $\{g_n{(I, {\bf m})}|_V\}$
		converges uniformly to a function $g_{I, {\bf m}}|_V$ where $g_{I, {\bf m}}: [0, \infty)\setminus v_{\sP_D, F}\longto \R$ is a compactly supported continuous function given by
		$$g_{I, {\bf m}}(\lambda)=g_{R, {\bf m}}(\lambda)-f_{R/I, {\bf m}/I}(\lambda)+\sum_{\mu\in \mu_{D,F}}\textnormal{\mbox{rVol}}_{d-2}(\partial{(\sP_D)}\cap H_{F, \mu}\cap \{z=\lambda\}).$$
		Here $f_{R/I, {\bf m}/I}: [0, \infty) \longto \R_{\geq 0}$ is the \textnormal{\mbox{HK}} density function of the graded ring $R/I$ with respect to the homogeneous maximal ideal ${\bf m}/I$.
		
		Moreover,
		$$\beta(I, {\bf m})=\int_{0}^{\infty}g_{I, {\bf m}}(\lambda)d\lambda.$$ 
	\end{thm}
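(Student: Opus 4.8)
The plan is to reduce Theorem \ref{thidealbeta} to the already-established structural results about the bodies $\sP_D$ and the behavior of Hilbert-Kunz-type invariants under the short exact sequence $0 \to I \to R \to R/I \to 0$. First I would fix $q=p^n$ and write, in each graded piece of degree $m = \lfloor \lambda q\rfloor$, the exact relation coming from $0\to I/{\bf m}^{[q]}I \to R/{\bf m}^{[q]}I \to R/I \to 0$ together with $0 \to {\bf m}^{[q]}R/{\bf m}^{[q]}I \to R/{\bf m}^{[q]}I \to R/{\bf m}^{[q]}R \to 0$. The module ${\bf m}^{[q]}R/{\bf m}^{[q]}I \cong ({\bf m}^{[q]}R \otimes R/I)/(\text{torsion})$ is, up to bounded-colength corrections supported where $I$ fails to be a complete intersection, governed by the combinatorics of the facet $F$: the failure is measured precisely by lattice points lying on the hyperplanes $H_{F,\mu}$. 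Assembling these, one gets for each $m$ an identity
\begin{equation*}
\ell(I/{\bf m}^{[q]}I)_m = \ell(R/{\bf m}^{[q]}R)_m - \ell(R/I)_m + (\text{correction}_{F,q})_m,
\end{equation*}
where the correction counts lattice points of $C_D$ at ``level'' $\approx m$ sitting on $\sigma_F = \mu$ for $\mu \in \mu_{D,F}$, with an error of lower order in $q$.

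Next I would divide by $q^{d-2}$ and subtract $f_{I,{\bf m}}(\lfloor\lambda q\rfloor/q)q^{d-1}$, using the fact (from \cite{MT2}, quoted here) that $f_{I,{\bf m}} = f_{R,{\bf m}} - f_{R/I,{\bf m}/I} + (\text{rank correction})$; in the divisorial height-one case $\rank(I)=1$ so $f_{I,{\bf m}} = f_{R,{\bf m}}$ up to the codimension-one contribution, and the leading $q^{d-1}$ terms cancel against $g_n(R,{\bf m})$ and the density function of $R/I$. What remains at order $q^{d-2}$ is $g_n(R,{\bf m})(\lambda) - f_n(R/I,{\bf m}/I)(\lambda)$ plus $q^{-(d-2)}$ times the facet-correction count. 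I would then identify the limit of that last term: the lattice-point count on $C_D \cap \{\sigma_F = \mu\} \cap \{z \approx \lambda q\}$, normalized by $q^{d-2}$, converges (by the standard Riemann-sum/relative-volume argument, exactly as in the proof of Theorem \ref{thbeta} and the \mbox{HK} density results) to $\sum_{\mu \in \mu_{D,F}} \mbox{rVol}_{d-2}(\partial(\sP_D)\cap H_{F,\mu}\cap\{z=\lambda\})$. Uniform convergence on compact sets avoiding the finite bad set $v_{\sP_D,F}$ (the $\lambda$ where slices jump, coming from vertices of $\sP_D$ and the finitely many values $\mu \in \mu_{D,F}$) follows from the uniform convergence already known for $g_n(R,{\bf m})$, for $f_n(R/I,{\bf m}/I)$, and the equicontinuity of the volume slices away from those points.

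Finally, for the integral formula $\beta(I,{\bf m}) = \int_0^\infty g_{I,{\bf m}}(\lambda)d\lambda$, I would integrate the displayed expression termwise: $\int_0^\infty g_{R,{\bf m}} = \beta(R,{\bf m})$ by Theorem \ref{thbeta}, $\int_0^\infty f_{R/I,{\bf m}/I} = e_{HK}(R/I,{\bf m}/I)$ by Trivedi's theorem, and $\int_0^\infty \sum_\mu \mbox{rVol}_{d-2}(\partial(\sP_D)\cap H_{F,\mu}\cap\{z=\lambda\})d\lambda = \sum_\mu \mbox{rVol}_{d-1}(\partial(\sP_D)\cap H_{F,\mu})$ by Fubini for relative volume. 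Then I must check that this matches $\beta(I,{\bf m})$ computed directly from the Huneke-McDermott-Monsky asymptotics: summing the graded identity above over $m$ and extracting the $q^{d-1}$-coefficient gives $\beta(I,{\bf m}) = \beta(R,{\bf m}) - e_{HK}(R/I, {\bf m}/I) + \sum_\mu \mbox{rVol}_{d-1}(\partial(\sP_D)\cap H_{F,\mu})$, so the two agree. The main obstacle I anticipate is controlling the facet-correction term precisely — i.e., showing that the deviation of ${\bf m}^{[q]}R/{\bf m}^{[q]}I$ from the ``naive'' module $(R/I)[-?]^{\oplus ?}$ is exactly accounted for, in the limit, by the lattice points on the hyperplanes $H_{F,\mu}$ and contributes nothing outside that finite set of $\mu$'s, with all remaining discrepancies genuinely $O(q^{d-2})$ in each degree uniformly; this is the place where the projective normality of $(X,D)$ and the explicit monomial structure of $p_F$ must be used carefully, and where the bad set $v_{\sP_D,F}$ is pinned down.
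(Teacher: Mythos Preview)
Your outline follows the same skeleton as the paper: decompose $g_n(I,{\bf m})$ as $g_n(R,{\bf m}) - f_n(R/I,{\bf m}/I)$ plus a correction, identify the correction in the limit with $\psi_F(\lambda)=\sum_{\mu}\mbox{rVol}_{d-2}(\partial(\sP_D)\cap H_{F,\mu}\cap\{z=\lambda\})$, and assemble uniform convergence and the integral formula from the known results for each summand. Two places deserve sharpening, however.

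First, your bookkeeping of the correction is off. The paper uses the filtration ${\bf m}^{[q]}I\subseteq {\bf m}^{[q]}\cap I\subseteq I$, giving directly
\[
\ell(I/{\bf m}^{[q]}I)_m=\ell(R/{\bf m}^{[q]})_m-\ell\big((R/I)/({\bf m}/I)^{[q]}\big)_m+\ell\big(({\bf m}^{[q]}\cap I)/{\bf m}^{[q]}I\big)_m,
\]
and sets $\psi_n(\lambda)=q^{-(d-2)}\ell\big(({\bf m}^{[q]}\cap I)/{\bf m}^{[q]}I\big)_{\lfloor q\lambda\rfloor}$. Your module ${\bf m}^{[q]}R/{\bf m}^{[q]}I$ is strictly larger: it surjects onto $({\bf m}/I)^{[q]}$ with kernel precisely $({\bf m}^{[q]}\cap I)/{\bf m}^{[q]}I$. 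So your displayed identity with the term $-\ell(R/I)_m$ only yields $-f_n(R/I,{\bf m}/I)$ after you split off that quotient and let it cancel against $-\ell(R/I)_m$; your sentence asserting the whole correction ``counts lattice points on $\sigma_F=\mu$'' is therefore not accurate as written.

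Second, the step you flag as ``the main obstacle'' is exactly where the paper's work lies, and it is not merely a Riemann-sum argument. Using projective normality the paper shows $p_F$ is generated by $\{\chi^{(u,1)}:u\in(P_D\setminus F)\cap M\}$, whence the monomial basis of $({\bf m}^{[q]}\cap I)/{\bf m}^{[q]}I$ in degree $m$ is exactly $qA_F\cap\Lambda_m$, where $A_{F,\mu}=\big(\cup_{\sigma_F(u,1)=\mu}(u,1)+C_F\big)\setminus\big(\cup_{\sigma_F(v,1)\neq\mu}C_v\big)$. Two geometric lemmas then establish $A_{F,\mu}\subseteq\partial(\sP_D)\cap H_{F,\mu}$ and that the complement is a finite union of rational polytopes of dimension $\leq d-2$ meeting each $\{z=\lambda\}$ in dimension $\leq d-3$ except for finitely many $\lambda$; this finite exceptional set is $T_F$, and $v_{\sP_D,F}=v(\sP_D)\cup T_F$. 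Your description of the bad set as arising from ``the finitely many values $\mu\in\mu_{D,F}$'' is not the right mechanism.
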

	
	Now we give a brief sketch of the proof of Theorem \ref{thidealbeta}. Since $f_{R, {\bf m}}(\lambda)=f_{I, {\bf m}}(\lambda)$ for all $\lambda\in[0,\infty)$ (\cite[Proposition 2.14]{MT19}), we note that
	\begin{equation}\label{eq01}g_n(I, {\bf m})(\lambda)=g_n(R, {\bf m})(\lambda)+f_n(R/I, {\bf m}/I)(\lambda)+\psi_n(\lambda),
	\end{equation}
	where
	the function $\psi_n:[0, \infty)\longto \R$ is given by
	$$\psi_n(\lambda)=\frac{1}{q^{d-2}}\ell\left(\frac{m^{[q]}\cap I}{m^{[q]}I}\right)_{\lfloor q\lambda\rfloor}.$$
	Thus we need to show the sequence of functions $\{\psi_n\}$ converges uniformly. We note that the proof of 
	`existence' of an invariant or a property in Hilbert-Kunz theory, often boils down to bounding the 
	`correction' term in a converging sequence. 
	For example, for the proof of \cite[Theorem 1]{HMM04}, for any torsion-free
	$R$-module $M$ with $c(M)=0$, they show that $\ell(M/\eta^{[q]}M)-\textnormal{rank}(M)\ell(R/\eta^{[q]})=O(q^{d-2})$. The \cite[Lemma 1.2]{HMM04} is crucial for this proof which uses a similar order bound on the
	length $\ell(T/\eta^{[q]}T)=O(q^{\dim(T)})$ for any finitely generated $R$-module $T$, due to Monsky \cite{Mon83}. To prove the 
	existence of the Hilbert-Kunz density function, in \cite[Proposition 2.12]{Tri18} Trivedi shows $|f_n(M, \eta)(\lambda)-f_{n'}(M, \eta)(\lambda)|=O(1/q)$ for all $n'>n\gg 0$. In \cite{MT20}, for $\lambda\in[0,\infty)$ and $\lambda_n:={\lfloor q\lambda\rfloor}/q \notin v(\sP_D)$ (Notations \ref{ndn}(2)), it is shown that $g_{n}(R, {\bf m})(\lambda)=g(R, {\bf m})(\lambda_n)+c(\lambda_n)/q $ with $|c(\lambda_n)|<\tilde{C}$, a constant independent of $\lambda$ and $n$. 
	
	In this paper, we use a similar approach to bound the error term in the converging sequence of functions $\{\psi_n\}$. In particular, we show that (Lemma \ref{lemconv}) there exists a finite set $v_{\sP_D, F}\subset [0, \infty)$ such that for all $\lambda\in [0, \infty)$ and for all $n\in \N$ with $\lambda_n\notin v_{\sP_D,F}$,
	$$\psi_n(\lambda)=\sum_{\mu\in \mu_{D,F}}\textnormal{\mbox{rVol}}_{d-2}(\partial{(\sP_D)}\cap H_{F, \mu}\cap \{z=\lambda_n\})+\frac{c_\lambda(n)}{q}$$
	where $|c_\lambda(n)|\leq C$ for some constant $C$, independent of $\lambda$ and $n$.
	Hence for any compact set $V\subset [0, \infty)\setminus v_{\sP_D,F}$, the sequence of functions $\{\psi_n|_V\}$ converges uniformly  
	to the function $\Psi_F|_V$, given by
	$\lambda\mapsto \sum_{\mu\in \mu_{D,F}}\textnormal{\mbox{rVol}}_{d-2}(\partial{(\sP_D)}\cap H_{F, \mu}\cap \{z=\lambda\}).$
	This observation along with Equation (\ref{eq01}), Theorem \ref{thbeta} and the property of \mbox{HK} density function give us the proof of the first part of the main Theorem. 
	
	Now, since $f_{R, {\bf m}}(\lambda)=f_{I, {\bf m}}(\lambda)$ for all $\lambda\in[0,\infty)$,
	by \cite[Lemma 40]{MT20} we have
	$$\int_{0}^{\infty} f_{I, {\bf m}}(\lfloor\lambda q\rfloor/q) d\lambda=e_{HK}(I, {\bf m}) +O(1/q^{d-2}).$$
	Now, a similar approximation of the integral of the function $g_{I, {\bf m}}$ by the integral of the functions $g_n(I , {\bf m})$, as was approximated the integral of the function $g_{R, {\bf m}}$ by the integral of the functions $g_n(R , {\bf m})$ in \cite{MT20}, gives us that
	$\int_{0}^{\infty}g_{I, {\bf m}}=\beta(I, {\bf m})$. 
	
	\vspace{.2cm}

	\noindent {\bf Acknowledgement}: I would like to express my gratitude to Prof. V. Trivedi for her continuous encouragement and insightful discussions.
	
	\section{Density functions on projective toric varieties}\label{Sec:2}
	In this paper we work over an algebraically closed field $K$ with char $p > 0$.
	Let  $\sN$ be  a lattice (which is isomorphic to $\Z^{d-1}$)
	and let  $\sM = \text{Hom}(\sN, \Z)$ denote the dual lattice 
	with a dual pairing $\langle\ , \rangle$.  
	Let $T = \text{\mbox{Spec}}(K[\sM])$ be the torus with character lattice $\sM$ and let $X$ be a complete toric variety over $K$ with fan $\Delta \subset \sN\otimes\R:=\sN_\R$. The irreducible subvarieties of 
	codimension $1$ of $X$ which are stable under the action of the torus $T$  correspond to the edges (one dimensional cones)
	of $\Delta$. If  
	$\tau_1,\ldots, \tau_n$ denote the edges of the fan $\Delta$, then these 
	divisors are the orbit closures $D_i= V(\tau_i)$. Let $v_i$ be the first lattice
	point along the edge $\tau_i$. A very ample
	$T$-Cartier divisor 
	$D=\sum_i a_iD_i$ ($a_i\in \Z$) determines a  
	convex lattice polytope in $\sM_{\mathbb{R}}:=\sM\otimes\R$ defined by
	\begin{equation}\label{eq2.1}
		P_D=\{u\in \sM_{\mathbb{R}} \ | \ \langle u, v_i\rangle\geq -a_i 
		~~\text{for all}\ 
		i\ \}\end{equation}
	and the induced embedding of $X$ in $\P^{l-1}$ is given by 
	$$\phi=\phi_D: X\to\mathbb{P}^{l-1},\ 
	\ x\mapsto ({\chi}^{u_1}(x):\cdots: {\chi}^{u_l}(x)),$$ 
	where $P_D\cap \sM=\{u_1, u_2,\ldots, u_l\}$ (for more detailed discussion,
	see  \cite{Ful93}). 
	
	The ring $K[{\chi}^{(u_1,1)},\ldots, {\chi}^{(u_l,1)}]$ is the homogeneous coordinate ring of $X$ with respect 
	to this embedding. 
	We have an isomorphism of graded rings \cite[Proposition 1.1.9]{CLS11}
	
	\begin{equation}\label{**} \frac{K[Y_1,\ldots, Y_l]}{I}\simeq K[{\chi}^{(u_1,1)},
		\ldots, {\chi}^{(u_l,1)}] =:R,\end{equation}
	where, the kernel $I$ is generated by the binomials  of the form
	$$Y_1^{a_1}Y_2^{a_2}\cdots Y_l^{a_l}-Y_1^{b_1}Y_2^{b_2}\cdots Y_l^{b_l}$$
	where $a_1,\ldots, a_l, b_1,\ldots, b_l$ are nonnegative integers satisfying 
	the equations 
	$$a_1u_1+\cdots+a_lu_l=b_1u_1+\cdots+b_lu_l\ \ \text{and}\ \ 
	a_1+\cdots+a_l=b_1+\cdots+b_l.$$
	Due to this isomorphism, we can consider $R= K[S]$ as a standard 
	graded ring with $\deg(\chi^{(u_i, 1)} )= 1$, where $S$ is the semigroup generated by $\langle (P_D\cap \sM)\times \{z=1\}\rangle$
	in  $\R^d$.
	
	Let $C_D$ be the cone generated by $\langle (P_D\cap \sM)\times \{z=1\}\rangle$
	in  $\R^d$. The prime ideals of the polytopal ring $R$ is in one-to-one correspondence with
	faces of $C_D$, given by $$C_F\leftrightarrow p_F:=\textnormal{ ideal of }R\text{ generated by the set of monomials } \{\chi^{\bf \nu}\mid{\bf \nu}\in S\setminus C_F\}\subset R$$ where $C_F$ is the face 
	of $C_D$ corresponding to a face $F$ of $P_D$ \cite[Proposition 2.36, Proposition 4.32]{BG09}. The height one prime ideals correspond to the facets of $P_D$ under this correspondence \cite[Proposition 4.35]{BG09}.
	In this case, the valuation $v_{p_F}$ is the unique extension of the support form $\sigma_F$ of $C_D$ associated with the facet $C_F$. When $(X, D)$ is a projectively normal toric pair, i.e., the associated homogeneous coordinate ring $R$ is an integrally closed domain, the semigroup
	$S=C_D\cap\Z^d$ and the divisorial monomial ideals of $R$ are exactly the $R$-submodules of $R=K[S]$ whose monomial
	basis is determined by a system
	$$\{x\in \R^d\mid \sigma_F(x)\geq n_F, F\text{ is a facet of }P_D\}$$
	for $n_F\in \Z$ \cite[Theorem 4.53]{BG09}. Let $\text{Div}(S)$ denote the subgroup of $\text{Div}(R)$ generated by monomial divisorial prime ideals and let $\text{Princ}(S)$ be its subgroup generated by principal monomial ideals. The class group of the semigroup $S$, denoted $\text{Cl}(S)=\text{Div}(S)/\text{Princ}(S)$ is generated by the classes of the ideals $p_{F}$ where $F$ runs over the set of facets of $P_D$ \cite[Corollary 4.55]{BG09} and is isomorphic to the group Cl$(R)$, the class group of $R$ \cite[Theorem 4.59]{BG09}.
	
	For a toric pair $(X, D)$, let 
	\begin{equation}\label{eq2.3}\mathcal{P_D}=\{p\in C_D\ |\ p\notin (u,1) +C_D,
		\text{for every}~u\in P_D\cap \sM\}.
	\end{equation}
	By result of Eto we have $e_{HK}(R, {\bf m})=\mbox{Vol}_d(\sP_D)=\mbox{Vol}_d(\overline{\sP}_D)$ \cite[Theorem 2]{Eto02}.
	Here $\mbox{Vol}_n$ denotes the $n$-dimensional volume. Moreover,
	$$\mbox{HKd}(R, {\bf m})(\lambda)=\mbox{Vol}_{d-1}(\sP_D\cap\{z=\lambda\})=\mbox{Vol}_{d-1}(\overline{\sP}_D\cap\{z=\lambda\})$$
	for all $\lambda\in [0, \infty)$ \cite[Theorem 1.1]{MT19}. In particular, it is a piecewise polynomial function.
	
	We recall the following result from \cite{MT20}:
	\begin{thm}\cite[Theorem 2, Corollary 3]{MT20}\label{thbeta} Let $(R, {\bf m})$ be the  
		homogeneous coordinate ring of dimension $d\geq 3$, associated 
		to the projectively normal toric pair $(X, D)$.
		Then  there exists a finite set $v(\sP_D)\subseteq 
		\R_{\geq 0}$ such that, for any compact set 
		$V\subseteq \R_{\geq 0}\setminus v(\sP_D)$, the sequence $\{g_n|_V\}_n$ $($as described in $(\ref{*g})$$)$
		converges uniformly  to $g_{R, {\bf m}}|_V$, where 
		$g_{R,{\bf m}}:\R_{\geq 0}\setminus v(\sP_D)\longto \R$ is a continuous 
		function  given by 
		$$g_{R, {\bf m}}(\lambda) = 
		\textnormal{\mbox{rVol}}_{d-2}\left(\partial({\sP_D})\cap \partial(C_D)
		\cap \{z=\lambda\}\right)
		- \frac{\textnormal{\mbox{rVol}}_{d-2}\left(\partial({\sP_D})\cap 
			\{z=\lambda\}\right)}{2}.$$
		
		Moreover, we have 
		$$\beta(R, {\bf m}) = \int_0^\infty g_{R, {\bf m}}(\lambda)d\lambda = 
		\textnormal{\mbox{rVol}}_{d-1}\left(\partial({\sP_D})\cap \partial(C_D)
		\right)
		- \frac{\textnormal{\mbox{rVol}}_{d-1}\left(\partial({\sP_D})\right)}{2}.$$
	\end{thm}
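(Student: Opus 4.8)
The idea is to translate the statement into two lattice‑point counts inside the bounded body $\sP_D$ — one in the height‑$\lambda$ slices, which will produce the density function $g_{R,{\bf m}}$, and one in $\sP_D$ itself, which will produce $\beta(R,{\bf m})$ — and then to read off the two leading coefficients of an Ehrhart‑type expansion, the essential subtlety being that $\sP_D$ is closed along $\partial(C_D)$ but open along the part of its boundary cut out by the translates $(u,1)+C_D$. First I would use projective normality to write $R=K[S]$ with $S=C_D\cap\Z^d$, graded by the last coordinate $z$, and ${\bf m}=(\chi^{(u,1)}\mid u\in P_D\cap\sM)$, so that ${\bf m}^{[q]}$ is the monomial ideal generated by the $\chi^{(qu,q)}$. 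Since $S=C_D\cap\Z^d$, for $(w,m)\in S$ one has $(w,m)\in(qu,q)+S$ iff $(w,m)/q\in(u,1)+C_D$; hence $\chi^{(w,m)}$ survives in $R/{\bf m}^{[q]}$ exactly when $(w,m)/q\in\sP_D$. Writing $(\sP_D)_t:=\{y\in\R^{d-1}\mid (y,t)\in\sP_D\}$ for the height‑$t$ slice, this gives
$$\ell(R/{\bf m}^{[q]})_m=\#\big(\Z^{d-1}\cap q\,(\sP_D)_{m/q}\big),\qquad \mbox{HK}(R,{\bf m})(n)=\sum_{m\ge 0}\ell(R/{\bf m}^{[q]})_m=\#\big(\Z^d\cap q\,\sP_D\big).$$
Combining the first identity with $f_{R,{\bf m}}(t)=\mbox{Vol}_{d-1}((\sP_D)_t)$ \cite[Theorem 1.1]{MT} and setting $m=\lfloor q\lambda\rfloor$, the definition (\ref{*g}) becomes
$$g_n(R,{\bf m})(\lambda)=\frac{1}{q^{d-2}}\Big(\#\big(\Z^{d-1}\cap q(\sP_D)_{m/q}\big)-q^{d-1}\mbox{Vol}_{d-1}\big((\sP_D)_{m/q}\big)\Big),$$
so $g_n$ records the $q^{d-2}$‑order \emph{surface defect} of a lattice‑point count in a dilated slice of $\sP_D$.

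\textbf{Step 2 (the slices and an Ehrhart estimate with open/closed facets).} Since $\sP_D$ is the intersection of the closed cone $C_D$ with the finitely many open sets $\{p\mid p\notin(u,1)+C_D\}$, its boundary splits (up to dimension $\le d-3$ in each slice) into a \emph{wall} part $\partial(\sP_D)\cap\partial(C_D)$, where $\sP_D$ is closed so that boundary lattice points are counted, and a \emph{cap} part, lying on the hyperplanes $\partial((u,1)+C_D)$, where $\sP_D$ is open so that boundary lattice points are not counted. Because $\sP_D$ is cut out by finitely many rational hyperplanes, there is a finite set $v(\sP_D)\subset\R_{\ge0}$ such that for $t$ in each component of $\R_{\ge0}\setminus v(\sP_D)$ the slice $(\sP_D)_t$ is a finite union of $(d-1)$‑dimensional rational polytopes of constant combinatorial type, with vertices depending affinely on $t$ and with uniformly bounded denominators, each facet lying on a facet‑hyperplane of $C_D$ (wall, closed) or of some $(u,1)+C_D$ (cap, open); the wall facets union to $\partial(\sP_D)\cap\partial(C_D)\cap\{z=t\}$ and all facets to $\partial(\sP_D)\cap\{z=t\}$. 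For a half‑open rational polytope in $\R^{d-1}$, Ehrhart--Macdonald reciprocity shows the leading two Ehrhart coefficients are $\mbox{Vol}_{d-1}$ and $\tfrac12\big(\sum_{\text{included facets}}\mbox{rVol}_{d-2}-\sum_{\text{excluded facets}}\mbox{rVol}_{d-2}\big)$, interpolating between the closed value and its negative. Summing this over the polytopal pieces of $(\sP_D)_{m/q}$ (inclusion--exclusion on shared faces costs $O(q^{d-3})$, and denominators are bounded because the vertices are intersections of a fixed hyperplane list with $\{z=m/q\}$), I expect to obtain, uniformly for $m/q$ in any compact $V\subset\R_{\ge0}\setminus v(\sP_D)$,
$$g_n(R,{\bf m})(\lambda)=B\big(\lfloor q\lambda\rfloor/q\big)+O(1/q),\qquad B(t):=\mbox{rVol}_{d-2}\big(\partial(\sP_D)\cap\partial(C_D)\cap\{z=t\}\big)-\tfrac12\,\mbox{rVol}_{d-2}\big(\partial(\sP_D)\cap\{z=t\}\big),$$
where the wall facets contribute $+\tfrac12$ and the cap facets $-\tfrac12$, and one uses $\tfrac12(\text{wall})-\tfrac12(\text{cap})=(\text{wall})-\tfrac12(\text{wall}+\text{cap})$. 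As a relative volume of slices of a fixed polyhedral set, $B$ is piecewise polynomial; after adjoining its finitely many breakpoints to $v(\sP_D)$ it is continuous on $\R_{\ge0}\setminus v(\sP_D)$, so $B(\lfloor q\lambda\rfloor/q)\to B(\lambda)$ uniformly on $V$ and $\{g_n(R,{\bf m})|_V\}$ converges uniformly to $g_{R,{\bf m}}|_V:=B|_V$, the asserted formula; compact support follows from boundedness of $\sP_D$.

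\textbf{Step 3 (the integral equals $\beta(R,{\bf m})$).} For the integral I would argue twice. On the one hand, Cavalieri's principle — integrating over the height the $(d-2)$‑dimensional relative volume of the slices of the $(d-1)$‑dimensional polyhedral sets $\partial(\sP_D)\cap\partial(C_D)$ and $\partial(\sP_D)$ — gives directly $\int_0^\infty g_{R,{\bf m}}(\lambda)\,d\lambda=\mbox{rVol}_{d-1}(\partial(\sP_D)\cap\partial(C_D))-\tfrac12\mbox{rVol}_{d-1}(\partial(\sP_D))$, the second asserted equality. On the other hand, I would apply the $d$‑dimensional half‑open Ehrhart estimate to the fixed rational polyhedral region $\sP_D$ (closed on the wall part, open on the cap part):
$$\#(\Z^d\cap q\sP_D)=\mbox{Vol}_d(\sP_D)\,q^d+\Big(\mbox{rVol}_{d-1}\big(\partial(\sP_D)\cap\partial(C_D)\big)-\tfrac12\mbox{rVol}_{d-1}\big(\partial(\sP_D)\big)\Big)q^{d-1}+O(q^{d-2}).$$
By Step 1 the left side is $\mbox{HK}(R,{\bf m})(n)$, and $\mbox{Vol}_d(\sP_D)=e_{HK}(R,{\bf m})$ by Eto \cite[Theorem 2]{Et}. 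Comparing with the Huneke--McDermott--Monsky expansion $\mbox{HK}(R,{\bf m})(n)=e_{HK}(R,{\bf m})q^d+\beta(R,{\bf m})q^{d-1}+O(q^{d-2})$ \cite[Theorem 1]{HMM2004} (applicable since $R$ is an excellent normal domain; cf.\ also \cite{Bruns}) then forces $\beta(R,{\bf m})=\mbox{rVol}_{d-1}(\partial(\sP_D)\cap\partial(C_D))-\tfrac12\mbox{rVol}_{d-1}(\partial(\sP_D))=\int_0^\infty g_{R,{\bf m}}(\lambda)\,d\lambda$.

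\textbf{Expected main obstacle.} The hard part will be the uniform version of the Ehrhart estimate in Step 2: extracting the $q^{d-2}$‑term of $\#(\Z^{d-1}\cap q(\sP_D)_{m/q})$ \emph{uniformly} over the moving family of rational polytopes $(\sP_D)_{m/q}$, whose vertices drift with $m/q=\lfloor q\lambda\rfloor/q$, while keeping exact track of which facets are closed (walls) and which are open (caps); it is precisely this sign asymmetry — $+\tfrac12$ on walls versus $-\tfrac12$ on caps — that produces the $-\tfrac12$ in the statement. A subsidiary difficulty is pinning down the finite exceptional set $v(\sP_D)$ together with the local constancy of the combinatorial type of the slices $(\sP_D)_t$.
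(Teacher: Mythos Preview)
This theorem is not proved in the present paper: it is quoted verbatim from \cite[Theorem~2, Corollary~3]{MT2} as background (``We recall the following result from \cite{MT2}''), so there is no in-paper proof to compare against. What the paper does include is a summary of the machinery from \cite{MT2} used there to prove it: the decomposition $C_D=\cup_j F_j$ into $d$-dimensional cones, the convex rational polytopes $P_j=\overline{F_j\cap\sP_D}$, and the boundary description $\partial(\sP_D)=\cup_{\{E\in\sF(P_j)\mid E\neq P_i\cap P_j\}}E$ (Lemma~\ref{l7}).

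Your proposal is correct in spirit and matches the strategy of \cite{MT2}: translate $\ell(R/{\bf m}^{[q]})_m$ into a lattice-point count in $q(\sP_D)_{m/q}$, split the boundary into a closed ``wall'' part on $\partial(C_D)$ and an open ``cap'' part on the translates $(u,1)+C_D$, and read off the $q^{d-2}$-coefficient via a half-open Ehrhart--Macdonald argument that weights included facets by $+\tfrac12$ and excluded ones by $-\tfrac12$. The difference is one of implementation rather than idea. You work directly with the moving slices $(\sP_D)_t$ and appeal to ``constant combinatorial type'' on the complement of a finite set; \cite{MT2} instead fixes the combinatorics once and for all by the $P_j$-decomposition, so that every slice $(\sP_D)_t$ is a union of slices of finitely many \emph{fixed} convex rational polytopes $P_j$ whose facets are already classified as wall or cap. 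This is precisely the device that resolves your ``expected main obstacle'': the uniform Ehrhart estimate with bounded error and bounded denominators is obtained in \cite{MT2} facet-by-facet on the $P_j$'s (their Lemmas on $i(E_\lambda,q)$), with the finite exceptional set $v(\sP_D)$ given concretely as the $z$-projection of the vertex set $\cup_j v(P_j)$. Your Step~3 (Cavalieri plus the $d$-dimensional half-open Ehrhart count compared with the Huneke--McDermott--Monsky expansion) is likewise the argument of \cite[Corollary~3]{MT2}.
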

	
	Throughout the paper, we use the following notations.
	\begin{notations}\rm\label{n1}
		\begin{enumerate}
			\item For a facet $F$ of $P_D$, let $C_F$ be the corresponding facet of $C_D$ with
			supporting hyperplane $H_F$ and support form $\sigma_F:\R^d\longto \R$. Hence 
			$H_F=\{x\in\R^d\mid \sigma_F(x)=0\}$ and $C_F=C_D\cap H_F$. Note that
			$$C_D=\cap_{\{F\mid F\text{ is a facet of }P_D\}}\{x\in\R^d\mid\sigma_F(x)\geq 0\}.$$
			
			\item $C_u=(u, 1)+C_D$ for $u\in P_D\cap \sM$. 
			
			\item For the ideal $I=p_F$, we set
			$$C_{I}=\{x\in C_D\mid \sigma_F(x)>0 \}.$$

			\item For a set $A\subset \sM_\mathbb{R}\times \R\simeq \mathbb{R}^{d}$, we denote 
			$$A\cap \{z=\lambda\} := A\cap \{({\bf x}, \lambda)\mid {\bf x}\in \R^{d-1}\}.$$ 
			
			\item For a bounded set $A\subset \mathbb{R}^{d}$, we set $L(A) = A\cap (\sM\times\Z) =$ the (finite) 
			set of lattice points of $A$. 
			
			\item For $m\in\Z$, let us denote the set of lattice points in the hyperplane $\{z=m\}$ by $\Lambda_m$, i.e.,
			$\Lambda_m=\Z^{d-1}\times\{z=m\}\subseteq \R^d$.
			
		\end{enumerate}
		
	\end{notations}

	\begin{lemma}\label{lemma4.1} Let $C_F\subset C_D$ be the cone generated by the facet $F$ of $P_D$. Then
		\begin{enumerate}
			\item for $q\in \N$, we have
			$C_{F}\setminus \cup_{u\in L(P_D)}q(u, 1)+C_D=C_{F}\setminus \cup_{u\in L(F)}q(u, 1)+C_F$.
			\item $[C_{D}\setminus \cup_{u\in L(P_D)}C_u]\cap C_F=C_{F}\setminus \cup_{u\in L(P_D)}C_u=C_{F}\setminus \cup_{u\in L(F)}(u, 1)+C_F.$
		\end{enumerate}
	\end{lemma}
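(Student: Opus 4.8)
The plan is to prove Lemma \ref{lemma4.1} by carefully unwinding the definitions of the cones $C_D$, $C_F$, and $C_u$, together with the facet structure recorded in Notations \ref{n1}. For part (1), the key observation is that for a point $p\in C_F$ and a lattice point $u\in L(P_D)$, the translate $q(u,1)+C_D$ can contain $p$ only if $u$ lies on the facet $F$ itself. Indeed, $p\in C_F$ means $\sigma_F(p)=0$; if $p\in q(u,1)+C_D$, then $p = q(u,1) + c$ for some $c\in C_D$, so $0 = \sigma_F(p) = q\sigma_F(u,1) + \sigma_F(c)$. Since $(u,1)\in C_D$ (as $u\in P_D\cap\sM$, using $S = C_D\cap\Z^d$ by projective normality) we have $\sigma_F(u,1)\geq 0$, and likewise $\sigma_F(c)\geq 0$; hence both must vanish. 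So $\sigma_F(u,1)=0$, which forces $(u,1)\in C_F$, i.e. $u\in F$, hence $u\in L(F)$; and $\sigma_F(c)=0$, i.e. $c\in C_F$. Conversely, if $u\in L(F)$ then $q(u,1)+C_F\subseteq C_F$ and $q(u,1)+C_F = (q(u,1)+C_D)\cap C_F$ by the same computation (intersecting $q(u,1)+C_D$ with $\{\sigma_F = 0\}$). Taking complements inside $C_F$ and then unioning over $u$ gives the stated identity. The only subtlety is bookkeeping between the index set $L(P_D)$ and $L(F)$, which the argument above handles cleanly.

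For part (2), I would first establish the middle equality $C_F\setminus \bigcup_{u\in L(F)}((u,1)+C_F) = C_F\setminus\bigcup_{u\in L(P_D)}C_u$: this is exactly the case $q=1$ of part (1) (noting $C_u = (u,1)+C_D$), combined with the observation from the proof of (1) that for $u\notin F$ the set $C_u\cap C_F$ is already empty or contributes nothing beyond what $u\in F$ already removes. More precisely, $C_F\cap C_u = C_F\cap(q(u,1)+C_D)$ with $q=1$, which by the dichotomy above is $(u,1)+C_F$ when $u\in L(F)$ and is contained in $\{p\in C_F:\sigma_F(p)>0\} = \varnothing$ (since $C_F\subseteq\{\sigma_F=0\}$) — wait, one must be careful: if $u\notin F$ then $\sigma_F(u,1)>0$ and the computation $0 = \sigma_F(p) = \sigma_F(u,1) + \sigma_F(c)$ with both terms $\geq 0$ and the first strictly positive is impossible, so $C_F\cap C_u = \varnothing$. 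Thus only the $u\in L(F)$ terms matter, giving the middle equality. The left equality $[C_D\setminus\bigcup_{u\in L(P_D)}C_u]\cap C_F = C_F\setminus\bigcup_{u\in L(P_D)}C_u$ is then immediate by distributing the intersection: $(C_D\cap C_F)\setminus\bigcup C_u = C_F\setminus\bigcup C_u$ since $C_F\subseteq C_D$.

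The expected main obstacle is not conceptual but rather making sure the elementary facet computation $\sigma_F(p) = 0,\ p = q(u,1)+c \Rightarrow \sigma_F(u,1)=\sigma_F(c)=0$ is deployed correctly in both directions and that the equality of \emph{sets} (not just inclusions) is verified on the nose — in particular checking that every point of $(u,1)+C_F$ for $u\in L(F)$ genuinely lies in $q(u,1)+C_D$ for the relevant $q$, which for $q=1$ is clear and for general $q$ in part (1) needs $q(u,1)+C_F \subseteq q(u,1)+C_D$ together with the reverse containment after intersecting with $\{\sigma_F=0\}$. I would organize the write-up as: (i) record that $\sigma_F\geq 0$ on $C_D$ and $\sigma_F(v) = 0 \iff v\in C_F$ for $v\in C_D$; (ii) prove the dichotomy lemma for points of $C_F$ meeting a translate $q(u,1)+C_D$; (iii) deduce (1); (iv) deduce the middle and left equalities of (2) by specializing and distributing. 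No deep input is needed beyond \cite[Theorem 4.53]{BG2009} and the standing projective-normality hypothesis $S = C_D\cap\Z^d$.
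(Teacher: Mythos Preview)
Your proposal is correct and follows essentially the same approach as the paper: both arguments reduce to the single computation that if $p\in C_F$ with $p=q(u,1)+c$ for some $c\in C_D$, then $0=\sigma_F(p)=q\sigma_F(u,1)+\sigma_F(c)$ forces $\sigma_F(u,1)=\sigma_F(c)=0$, hence $u\in L(F)$ and $c\in C_F$; part (2) is then the $q=1$ case together with $C_F\subseteq C_D$. One small remark: you do not need projective normality (nor \cite[Theorem 4.53]{BG2009}) to know $(u,1)\in C_D$ for $u\in L(P_D)$ --- this is immediate from the definition of $C_D$ as the cone over $(P_D\cap\sM)\times\{1\}$, and indeed the paper's proof does not invoke it either.
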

	
	\begin{proof}
		Proof of Part $(1)$: Since $\cup_{u\in L(P_D)}q(u, 1)+C_D\supset\cup_{u\in L(F)}q(u, 1)+C_F$, it is enough to show 
		$$C_{F}\cap[\cup_{u\in L(P_D)}q(u, 1)+C_D]=\cup_{u\in L(F)}q(u, 1)+C_F.$$
		Let $x\in C_{F}\cap[\cup_{u\in L(P_D)}q(u, 1)+C_D]$. Choose $u_0\in L(P_D)$ such that $x=q(u_0,1)+y$ for some
		$y\in C_D$. Since $x\in C_F$, we have $0=\sigma_F(x)=q\sigma_F(u_0,1)+\sigma_F(y)$. This implies $\sigma_F(u_0,1)=\sigma_F(y)=0$, i.e., $(u_0,1), y\in C_F$. Hence $x\in \cup_{u\in L(F)}q(u, 1)+C_F.$ The reverse inclusion follows since $\sigma_F((u, 1)+y)=0$ for all $u\in L(F)$ and $y\in C_F$.
		
		Proof of Part $(2)$: The first equality is obvious. The second equality follows from Part $(1)$. 
	\end{proof}
	
	\begin{rmk}\label{rmkidealdensityfn}
		Let $(R, {\bf m})$ be the  
		homogeneous coordinate ring of dimension $d\geq 3$, associated 
		to the projectively normal toric pair $(X, D)$.
		Let $I$ be a monomial prime ideal of height one, associated to a facet $F$ of the polytope $P_D$ and let $f_{\overline{R}, \overline{\bf m}}$ be \textnormal{\mbox{HK}} density function of the standard graded ring $\overline{R}:=R/I$ with respect to its homogeneous maximal ideal
		$\overline{\bf m}={\bf m}/I$. For $\lambda\in[0, \infty)$ and $q=p^n, n\in\N$, we have $f_{\overline{R}, \overline{\bf m}}(\lambda)=\lim_n f_n({\overline{R}, \overline{\bf m}})(\lambda)$
		\begin{eqnarray*}\label{hkdqtring}= \lim_n\frac{1}{q^{d-2}}\ell\left(\frac{R}{m^{[q]}+I}\right)_{\lfloor q\lambda \rfloor}=\lim_n\frac{1}{q^{d-2}}\#\left[(C_F\setminus \cup_{u\in L(P_D)}q(u,1)+C_D)\cap\Lambda_{\lfloor q\lambda \rfloor}\right]\\
			= \lim_n\frac{1}{q^{d-2}}\#\left[(C_F\setminus \cup_{u\in L(F)}q(u,1)+C_F)\cap\Lambda_{\lfloor q\lambda \rfloor}\right].
		\end{eqnarray*}
	\end{rmk}
	
	\section{The boundary of $\sP_D$ parallel to the facet $C_F$ of the cone $C_D$}
	In this section, we study the set $\partial(\sP_D)$, the set $\partial(\sP_D)\cap\{x\in C_D\mid \sigma_{F}(x)=\mu\}$, where $\mu=\sigma_{F}(u, 1)$ for some $u\in L(P_D\setminus F)$. We also study the coefficient of the Ehrhart quasi-polynomial of certain polytopes lying inside $\partial(\sP_D)$. We set the following notations first: 
	\begin{notations}\rm\label{n1} 
		\begin{enumerate}
			\item For a convex polytope $Q$, let $v(Q) = \{\mbox{vertices of}~Q\}$ and
			$\sF(Q) = \{\mbox{facets of}~Q\}$.
			\item For a convex polytope $Q\subset \R^d$, and for $\lambda\in [0,\infty]$ we set $Q_{\lambda}=Q\cap\{z=\lambda\}$.
			\item For a set $F\subseteq \R^d$, $\partial(F)=$ boundary of $F$ in $\R^d$ and $F^\circ=F\setminus \partial(F)=$ interior of $F$ in $\R^d$.
			\item For a set $F\subseteq \R^d$, $\partial_C(F)=$ boundary of $F$ in $C_D$ in the subspace topology of $C_D$, thinking of $C_D\subseteq \R^d$. 
			\item For a set $F\subseteq \R^d$, we denote $A(F)=$ affine hull of $F$ in $\R^d$, the smallest affine set containing $F$,
			i.e., $A(F)=\{\sum_i^m a_if_i\mid m\in \N, a_i\in \R, f_i\in F, \sum_{i=1}^ma_i=1\}$.
			\item For a set $F\subseteq \R^d$, we say $y\in \textnormal{\mbox{relint}}(F)$, the relative interior of $F$, if there exists $\epsilon>0$ such that 
			$B_d(y, \epsilon)\cap A(F)\subseteq F$. Here $B_d(y, \epsilon)$ denotes the $d$-dimensional ball of radius $\epsilon$ around $y$.
			\item For a facet $F$ of $P_D$, and for $\mu\in\Q_{> 0}$, we set $H_{F, \mu}=\{x\in \R^d\mid\sigma_F(x)=\mu\}$.
			\item Let $\mu_{D, F}:=\{\mu\in \Q_{>0}\mid \sigma_{F}(u, 1)=\mu \text{ for some }u\in L(P_D\setminus F)\}$.
			\item $\partial_{D, F}=\cup_{\mu\in \mu_{D, F}}\partial(\sP_D)\cap H_{F, \mu}$.
			
		\end{enumerate}
		
	\end{notations}
	
	For a toric pair $(X, D)$, a decomposition of $C_D = \cup_{j=1}^sF_j$ was given in \cite[Lemma 4.5]{MT19}, (for
	$d\geq 3$, as $d=2$ corresponds to $(\P^1, \sO_{\P^1}(n))$, for $n\geq 1$, 
	which is easy to handle directly), 
	where $F_j$'s are 
	$d$-dimensional cones such that, each $P_j := F_j\cap {\overline{\sP}_D}$  
	is a convex rational polytope and is a closure of $P_j':= 
	F_j\cap {\sP_D}$. In \cite{MT20}, the boundary of $\sP_D$ was studied and described in terms of
	the facets of $P_j$'s. We recall the decomposition of $C_D$ and few properties
	of $\partial(\sP_D)$ from \cite{MT19} and \cite{MT20} which are relevant for this work. 
	
	The cone $F_j \in \{d\text{-dimensional cones}\}$, which is the  
	closure of a  connected component of
	$C_D\setminus \cup_{iu}H_{iu}$,
	where the hyperplanes $H_{iu}$ are given by 
	$$H_{iu} = \mbox{the affine hull of}~~\{(v_{ik},1), (u,1),
	({\bf 0})\mid v_{ik}\in v(C_{0i}),~~u\in L(P_D)\},$$
	where $C_{0i} \in \{(d-3)~\mbox{dimensional faces of}~P_D\}$ and ${\bf 0}$ is the 
	origin of $\R^d$.
	For $u\in L(P_D)$, let 
	$$P_j'= F_j \cap\cap_{u\in L(P_D)}(C_u)^c = 
	F_j \cap\cap_{u\in L(P_D)}[C_D\setminus C_u],$$ which is a convex set
	\cite[Lemma 4.5]{MT19}
	and  $P_j=\overline{F_j \cap\cap_{u\in L(P_D)}(C_D\setminus C_u)}$  is the
	$d$-dimensional convex rational polytope
	which is the closure of $P_j'$ in $C_D$ (which equals the closure in $\R^d$). 
	
	Therefore  
	$$\mathcal{P}_D=\cup_{j=1}^s P_j'\quad \mbox{and}\quad 
	\overline{\mathcal{P}}_D=\cup_{j=1}^s P_j,$$
	where $P_1, \ldots, P_s$ are distinct polytopes, whose interiors are disjoint. Moreover, facets of each $P_j$
	are transversal to the $z$-hyperplane, i.e., $\dim(\partial(P_j)\cap\{z=\lambda\})<d-1$ for all $\lambda\in\R$ and for all $j$. Note that 
	$$P_j= {\overline{F_j\setminus \cup_{u\in L(P_D)} C_u}} =
	{\overline{\cap_{u\in L(P_D)} F_j
			\setminus C_u}}=P_j'\sqcup \left(\cup_{u\in L(P_D)}\partial_C(C_u)\cap P_j\right)$$
	and $\partial_C(C_u)\cap P_j = 
	\cup_{\{E\mid E\in \sF(C_u),~E\not\subseteq \partial(C_D)\}} E\cap{P_j}$ \cite[Lemma 8]{MT20}.
	Moreover, for any facet $E\in \sF(P_j)$, either $E\subset E_{j_i}$, for some facet $E_{j_i} \in \sF(F_j)$; 
	or $F\subset F_{u_\nu}$, for some facet $F_{u_\nu}\in \sF(C_u)$ and
	$u\in L(P_D)$. In the later case 
	$F = P_j\cap F_{u_\nu} = P_j\cap A(F_{u_\nu})$, where $F_{u_\nu}\not\subseteq \partial(C_D)$ \cite[Lemma 9]{MT20}.
	Finally we record \cite[Lemma 10]{MT20} which gives the explicit description of $\partial(\sP_D)$ as follows:
	\begin{lemma}\label{l7} \begin{enumerate}
			\item$\partial(\sP_D) = \cup_{\{E\in \sF(P_j)\mid E\neq P_i\cap P_j\}} E$.~~\mbox{In particular}
			\item	$\partial(\sP_D)=\bigcup_{\{ E\in \sF(C_D)\}}E\cap {\overline{\sP}_D} \cup 
			\bigcup_{\{E\in \sF(C_u), u\in L(P_D)\}}E\cap {\overline{\sP}_D}$.
		\end{enumerate}
	\end{lemma}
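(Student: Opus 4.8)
The plan is to prove (2) directly, by a short point‑set‑topological argument, and then to recover (1) as the same description rewritten in terms of the facet structure of the pieces $P_j$.

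\textbf{Reduction to $\overline{\sP}_D$.} First, $\partial(\sP_D)=\partial(\overline{\sP}_D)$. By the recalled decomposition $P_j=P_j'\sqcup(\bigcup_{u\in L(P_D)}\partial_C(C_u)\cap P_j)$ one has $\overline{\sP}_D\setminus\sP_D\subseteq\bigcup_j\bigcup_u(\partial_C(C_u)\cap P_j)\subseteq\bigcup_u\partial(C_u)$, and each of its points $x$ is already a boundary point of $\overline{\sP}_D$: moving from $x$ toward an interior point of the relevant $C_u$ immediately leaves $\overline{\sP}_D$, since $\mathrm{int}(C_u)\cap\overline{\sP}_D=\emptyset$ (a point deep inside $C_u$ is bounded away from $C_D\setminus\bigcup_{u'}C_{u'}$). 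Hence $\overline{\sP}_D\setminus\sP_D\subseteq\partial(\overline{\sP}_D)$, so $\mathrm{int}(\overline{\sP}_D)\subseteq\sP_D$, which gives $\mathrm{int}(\overline{\sP}_D)=\mathrm{int}(\sP_D)$ and $\partial(\sP_D)=\partial(\overline{\sP}_D)$. So it is enough to describe the topological boundary of the polytopal region $\overline{\sP}_D=\bigcup_{j=1}^sP_j$.

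\textbf{Proof of (2).} For $\subseteq$ I argue: if $x\in\partial(\overline{\sP}_D)$, there is a direction along which one leaves $\overline{\sP}_D=\overline{C_D\setminus\bigcup_uC_u}$ arbitrarily close to $x$; leaving this set immediately means either leaving $C_D$, so $x\in\partial(C_D)$, or entering $\mathrm{int}(C_u)$ for some $u$, so $x\in\overline{\mathrm{int}(C_u)}=C_u$ and, since $x\in\overline{\sP}_D$ misses $\mathrm{int}(C_u)$, in fact $x\in\partial(C_u)$. Thus $\partial(\overline{\sP}_D)\subseteq[\partial(C_D)\cup\bigcup_u\partial(C_u)]\cap\overline{\sP}_D$, and decomposing $\partial(C_D)=\bigcup_{E\in\sF(C_D)}E$ and $\partial(C_u)=\bigcup_{E\in\sF(C_u)}E$ gives the right‑hand side of (2). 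For $\supseteq$: a point of $E\cap\overline{\sP}_D$ with $E\in\sF(C_D)$ lies on $\partial(C_D)$ while $\overline{\sP}_D\subseteq C_D$, hence is a boundary point; a point $x\in E\cap\overline{\sP}_D$ with $E\in\sF(C_u)$ lies on $\partial(C_u)$, and moving from $x$ toward an interior point of $C_u$ leaves $\overline{\sP}_D$ as above, so $x$ is a boundary point. This proves (2). (Facets $E\in\sF(C_u)$ with $E\subseteq\partial(C_D)$ are already counted in the first union, so the second may be restricted to $E\not\subseteq\partial(C_D)$, i.e.\ to $\bigcup_u\partial_C(C_u)\cap\overline{\sP}_D$.)

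\textbf{Proof of (1), and why (1) and (2) coincide.} Since the $P_j=F_j\cap\overline{\sP}_D$ subdivide $\overline{\sP}_D$ with pairwise disjoint interiors and fit along common faces (the $F_j$ being the closures of the components of $C_D$ cut by the hyperplanes $H_{iu}$), a point is interior to $\overline{\sP}_D$ iff it lies in some $\mathrm{int}(P_j)$ or in the relative interior of a shared facet $P_i\cap P_j$; so $\partial(\overline{\sP}_D)$ is the closure of the union of the relative interiors of the \emph{unshared} facets, i.e.\ $\bigcup\{E\in\sF(P_j):E\neq P_i\cap P_j\ \forall i\}$ — both sides being closed finite unions of facets, it suffices to match relative interiors. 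This is (1). That this agrees with (2): by the recalled facet description, each $E\in\sF(P_j)$ lies either on a facet of $F_j$ or is $E=P_j\cap F_{u_\nu}$ for a facet $F_{u_\nu}\in\sF(C_u)$ with $F_{u_\nu}\not\subseteq\partial(C_D)$. A facet of $P_j$ lying on a facet $C_{F'}$ of $C_D$ is automatically unshared (no $P_i$ lies on the far side of $\partial(C_D)$), and these sweep out $\bigcup_{F'}C_{F'}\cap\overline{\sP}_D$; a facet of $P_j$ lying on a cutting hyperplane $H_{iu}$ is always shared, because $H_{iu}$ passes through the origin and so is not a bounding hyperplane of any $C_u$, whence by convexity of $\overline{\sP}_D\cap F_j$ the region $\overline{\sP}_D$ continues across $H_{iu}$ into the neighbouring cone $F_i$, forcing the facet to equal $P_i\cap P_j$; and a facet $P_j\cap F_{u_\nu}$ with $F_{u_\nu}\not\subseteq\partial(C_D)$ is automatically unshared, since the side of $F_{u_\nu}$ not containing $P_j$ is locally $\mathrm{int}(C_u)$, which meets no $P_i$ — by \cite[Lemma 8]{MT2} these sweep out $\bigcup_u\partial_C(C_u)\cap\overline{\sP}_D$. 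So the unshared facets are precisely the pieces listed in (2).

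\textbf{Main obstacle.} The delicate point is the assertion that a facet of $P_j$ lying on an interior cutting hyperplane $H_{iu}$ is always shared with a neighbour $P_i$. This is where one genuinely uses how the decomposition $C_D=\bigcup_jF_j$ is built in \cite{MT}: the $H_{iu}$ all contain the origin (so they avoid the bounding hyperplanes $(u,1)+\partial(C_D)$ of the removed cones $C_u$) and each $\overline{\sP}_D\cap F_j$ is convex, whence $\partial(\overline{\sP}_D)$ cannot touch an $H_{iu}$ in codimension one — which is, after all, exactly what (2) records. Apart from this, the only care required is the routine bookkeeping of lower‑dimensional faces: both sides of (1) and (2) are closed finite unions of facets, so it is legitimate to argue only at the level of relative interiors of facets, as done above.
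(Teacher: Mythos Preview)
The present paper does not prove this lemma at all: it is merely recorded, verbatim, as \cite[Lemma~10]{MT2} and then used. So there is no ``paper's own proof'' here to compare your attempt against; the argument lives in \cite{MT2}.

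That said, your approach is sound and self-contained. The direct topological proof of (2) is correct: since $\overline{\sP}_D\subseteq C_D$ and $\mathrm{int}(C_u)\cap\overline{\sP}_D=\emptyset$ for every $u$, any boundary point must sit on $\partial(C_D)$ or on some $\partial(C_u)$, and conversely every point of $\overline{\sP}_D$ on such a wall is a boundary point because one side of the wall lies outside $\overline{\sP}_D$. Your deduction of (1) from the facet dichotomy for the $P_j$ (facets coming from facets of $F_j$ versus facets coming from some $F_{u_\nu}\in\sF(C_u)$) is also correct in outline, and you have put your finger on the one nontrivial step: that a facet of $P_j$ supported on an interior cutting hyperplane $H_{iu}$ is always shared with a neighbouring $P_i$. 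Your reason---$H_{iu}$ passes through the origin, hence cannot be the affine span of any facet of a translated cone $C_u$---is the right geometric observation, but turning it into a complete argument still needs the fact that the $P_j$ fit together as a polytopal complex (so that $P_i\cap P_j$ is a common face), which is part of the structural work done in \cite{MT} and \cite[Lemmas~8--9]{MT2}. With that input granted, your proof is complete.
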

	\begin{lemma}\label{boundaryinterior}
		Suppose $v\in L(P_D)\setminus L(F')$ for some $F'\in\sF(P_D)$. 
		\begin{enumerate}
			\item Then there exists a $d$-dimensional cone
			$F_j$ occurring in the decomposition of $C_D$ such that $(v,1)\in F_j$ and $[(v,1)+C_{F'}]\cap F_j^\circ\neq \emptyset.$
			\item Moreover, $\dim(\partial(P_j)\cap [(v,1)+C_{F'}])=d-1$, i.e., there exists $\tilde{E}\in \sF(P_j)$ such that $\tilde{E}\subset (v,1)+C_{F'}$.
		\end{enumerate}
	\end{lemma}
	\begin{proof}
		{Proof of Part $(1)$}: We choose small $\epsilon >0$ such that $B_d((v,1),\epsilon)\cap F_j=\emptyset$ for all cones $F_j$ in the decomposition of $C_D$ with $(v,1)\notin F_j$. Therefore $[(v,1)+C_{F'}]\cap B_d((v,1),\epsilon)\subseteq \cup_{(v,1)\in F_j}{F_j}$. If 
		$$[(v,1)+C_{F'}]\cap B_d((v,1),\epsilon)\subseteq \cup_{(v,1)\in F_j}\partial{F_j},$$
		then $d-1=\dim([(v,1)+C_{F'}]\cap B_d((v,1),\epsilon))=\dim([(v,1)+C_{F'}]\cap B_d((v,1),\epsilon)\cap\partial{F_{j_0}})$ for some
		$F_{j_0}$ containing $(v,1)$. Hence 
		$$A((v,1)+C_{F'})=A([(v,1)+C_{F'}]\cap B_d((v,1),\epsilon))=A(F'')$$
		for some $F''\in \sF(F_{j_0})$. This is a contradiction since $F''$ passes through origin in $\R^d$, whereas ${\bf 0}\notin A((v,1)+C_{F'})$ since $(v,1)\notin C_{F'}$. Hence $[(v,1)+C_{F'}]\cap F_j^\circ\neq \emptyset$ for some $F_j$ containing $(v,1)$. 
		
		Proof of Part $(2)$: We take a cone $F_j$ such that $(v, 1)\in F_j$ and $[(v,1)+C_{F'}]\cap F_j^\circ\neq \emptyset$. Note that $\dim([(v,1)+C_{F'}]\cap F_j^\circ)=d-1$. By the claim in the proof of \cite[Lemma 9(2)]{MT20}, we have $[(v,1)+C_{F'}]\cap F_j=A((v,1)+C_{F'})\cap F_j$. Hence
		$$[(v,1)+C_{F'}]\cap F_j^\circ\subseteq \textnormal{\mbox{relint}}((v,1)+C_{F'}),$$ and $\dim(\textnormal{\mbox{relint}}((v,1)+C_{F'})\cap F_j^\circ)=d-1$.  Since $(v,1)\in F_j$, this implies for any ball $B_d((v,1),\epsilon)$ around $(v,1)$ of radius $\epsilon>0$, we have 
		\begin{eqnarray}\label{relintdimn}
			\dim(\textnormal{\mbox{relint}}((v,1)+C_{F'})\cap F_j^\circ\cap B_d((v,1),\epsilon))=d-1.
		\end{eqnarray}
		Since $(v,1)\notin (u,1)+C_D$ for all $u\in L(P_D)\setminus \{v\}$, we can take small $\tilde{\epsilon}>0$ such that 
		$B_d((v,1),\tilde{\epsilon})\cap [(u,1)+C_D]=\emptyset$ for all $u\in L(P_D)\setminus \{v\}$. 
		For any 
		$$y\in \textnormal{\mbox{relint}}((v,1)+C_{F'})\cap F_j^\circ\cap B_d((v,1),\tilde{\epsilon}),$$ we may choose $\epsilon_y>0$ small enough such that
		$B_d(y, \epsilon_y)\subseteq F_j^\circ\cap B_d((v,1),\tilde{\epsilon})$
		and $B_d(y, \epsilon_y)\cap A((v,1)+C_{F'})\subseteq \textnormal{\mbox{relint}}((v,1)+C_{F'})$. Therefore,
		$$B_d(y, \epsilon_y)\cap P_j'=B_d(y, \epsilon_y)\setminus \cup_{u\in L(P_D)}(u,1)+C_D=B_d(y, \epsilon_y)\setminus (v,1)+C_D\neq \emptyset.$$
		Hence $\textnormal{\mbox{relint}}((v,1)+C_{F'})\cap F_j^\circ\cap B_d((v,1),\tilde{\epsilon})\subseteq \partial(P'_j)=\partial(P_j)$. From (\ref{relintdimn}), we have $\dim([(v,1)+C_{F'}]\cap \partial(P_j))=d-1$.
	\end{proof}

	\begin{lemma}\label{lemmacore1}
		\begin{enumerate}
			
			\item For $\mu\in \Q_{>0}$, let $$A_{F, \mu}=(\cup_{\stackrel{u\in L(P_D),}{\sigma_F(u, 1)=\mu}}(u,1)+C_F)\setminus (\cup_{\stackrel{v\in L(P_D),} {\sigma_F(v,1)< \mu}}(v,1)+C_D).$$ Then 
			$$A_{F, \mu}\subseteq \partial(\sP_D)\cap H_{F,\mu} \text{ for all } \mu\in\Q_{>0}.$$
			\item $\partial(\sP_D)\cap H_{F,\mu} \subseteq A_{F, \mu}\cup B_{F, \mu} \cup [\partial(\sP_D)\cap\partial(C_D)\cap H_{F,\mu}]$
			where
			$$B_{F, \mu}=\big[\cup_{\stackrel{v\in L(P_D), \sigma_F(v,1)< \mu}{F\neq F'\in\sF(P_D)}}(v,1)+C_{F'}\big]\cap H_{F, \mu}\cap \partial({\sP}_D).$$
		\end{enumerate}
		
	\end{lemma}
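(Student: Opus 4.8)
The plan is to analyze membership in $\partial(\sP_D)$ via the explicit description in Lemma~\ref{l7}(2), namely that $\partial(\sP_D) = \bigcup_{E\in\sF(C_D)}E\cap\bar\sP_D \;\cup\; \bigcup_{E\in\sF(C_u),\,u\in L(P_D)}E\cap\bar\sP_D$, together with the defining inequality $\sigma_F(x)=\mu$ that cuts out the hyperplane $H_{F,\mu}$. For part (1), I would take a point $p\in A_{F,\mu}$, so $p\in(u_0,1)+C_F$ for some $u_0\in L(P_D)$ with $\sigma_F(u_0,1)=\mu$, and $p\notin(v,1)+C_D$ for every $v$ with $\sigma_F(v,1)\neq\mu$. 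First I would check $p\in\sP_D$: if $p\in(v,1)+C_D$ for some $v\in L(P_D)$, then applying $\sigma_F$ gives $\mu=\sigma_F(p)\geq\sigma_F(v,1)\geq 0$; the case $\sigma_F(v,1)\neq\mu$ is excluded by hypothesis, and the case $\sigma_F(v,1)=\mu$ forces $\sigma_F(p-(v,1))=0$, i.e. $p-(v,1)\in C_F$ — but one needs to rule this out too, which should follow because $p$ being "exactly on'' the translate $(u_0,1)+C_F$ and not strictly interior to any $C_u$ keeps it on the boundary. Actually the cleaner route: $p\in(u_0,1)+C_F\subset(u_0,1)+\partial(C_D)$, so $p$ lies on a facet $E$ of $C_u$ (translate of a facet of $C_D$ through $(u_0,1)$), hence $p\in E\cap\bar\sP_D\subseteq\partial(\sP_D)$ by Lemma~\ref{l7}(2), once we know $p\in\bar\sP_D$; and $p\in\bar\sP_D$ follows from $p\in\overline{C_D\setminus\bigcup_u C_u}$, which is exactly what the defining condition of $A_{F,\mu}$ (non-membership in the relevant translates) gives after noting the translates with $\sigma_F(v,1)=\mu$ meet $H_{F,\mu}$ only in their boundary.

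For part (2), I would take $p\in\partial(\sP_D)\cap H_{F,\mu}$ and use Lemma~\ref{l7}(2) to place $p$ on some facet $E$: either $E\in\sF(C_D)$, in which case $p\in\partial(C_D)\cap H_{F,\mu}$ and we land in the third set of the union; or $E\in\sF(C_v)$ for some $v\in L(P_D)$, i.e. $E=(v,1)+F'$ for a facet $F'\in\sF(C_D)$ with $E\not\subseteq\partial(C_D)$. Write $F'=C_{F'}$ for the corresponding facet of $P_D$ (abusing the face/cone correspondence). Now $p\in(v,1)+C_{F'}$, and applying $\sigma_F$ gives $\mu=\sigma_F(p)=\sigma_F(v,1)+\sigma_F(p-(v,1))\geq\sigma_F(v,1)$ since $p-(v,1)\in C_{F'}\subset C_D$. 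This splits into two cases. If $\sigma_F(v,1)=\mu$: then $\sigma_F(p-(v,1))=0$, so $p-(v,1)\in C_D\cap\{\sigma_F=0\}=C_F$, hence $p\in(v,1)+C_F$; combined with $p\in\partial(\sP_D)\subseteq\sP_D$ which forces $p\notin(w,1)+C_D$ for $\sigma_F(w,1)\neq\mu$ (same $\sigma_F$-computation: such membership would need $\sigma_F(p)\geq\sigma_F(w,1)$ with equality of the residual term impossible unless on the boundary, an argument I would make precise), we get $p\in A_{F,\mu}$. If $\sigma_F(v,1)<\mu$: then $F'\neq F$ necessarily (if $F'=F$ then $C_{F'}=C_F\subset\{\sigma_F=0\}$ and $\sigma_F(p-(v,1))=0$ would force $\sigma_F(v,1)=\mu$, contradiction), so $p\in[(v,1)+C_{F'}]\cap H_{F,\mu}\cap\partial(\sP_D)\subseteq B_{F,\mu}$.

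The main obstacle I anticipate is the boundary-versus-interior bookkeeping in part (1) and in the $\sigma_F(v,1)=\mu$ case of part (2): showing that a point of $(u_0,1)+C_F$ which avoids all translates $(v,1)+C_D$ with $\sigma_F(v,1)\neq\mu$ genuinely fails to lie in the \emph{interior} of $\sP_D$, equivalently that it lies in some $(v,1)+\partial(C_D)$ or on $\partial(C_D)$ itself. The key point is that $C_F$ is a facet of $C_D$, so $(u_0,1)+C_F$ is contained in the relative boundary of the translate $(u_0,1)+C_D=C_{u_0}$; thus $p$ lies on a facet $E$ of $C_{u_0}$, and either $E\subseteq\partial(C_D)$ (third set) or $E$ is one of the facets appearing in Lemma~\ref{l7}(2), putting $p$ in $\partial(\sP_D)$ provided $p\in\bar\sP_D$. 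Verifying $p\in\bar\sP_D$ reduces to checking $p$ is not in the interior of any $C_v$, and for $\sigma_F(v,1)=\mu$ the intersection $C_v\cap H_{F,\mu}=[(v,1)+C_D]\cap\{\sigma_F=\sigma_F(v,1)\}=(v,1)+C_F$ lies entirely in $\partial(C_v)$, so no interior points arise — this is the crux and I would isolate it as a short sublemma. The rest is linear-algebra manipulation with the support form $\sigma_F$, which is routine.
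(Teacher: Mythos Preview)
Your approach to Part~(1) is sound and is essentially the paper's argument in different packaging: show $p\in\bar\sP_D$ by approaching from the open halfspace $\{\sigma_F<\mu\}$ (where none of the relevant translates can reach), then conclude $p\in\partial(\sP_D)$. The paper does this by directly exhibiting a half-ball in $\sP_D$ accumulating at $p$ (splitting into the cases $p\in C_D^\circ$ and $p\in\partial(C_D)$) and then noting $A_{F,\mu}\cap\sP_D=\emptyset$; you instead invoke Lemma~\ref{l7}(2) once $p\in\bar\sP_D$ is known. One caution: your phrase ``reduces to checking $p$ is not in the interior of any $C_v$'' is not literally a valid reduction (boundaries of several $C_v$'s could in principle cover a neighborhood), but your actual argument---that every translate containing $p$ lies in $\{\sigma_F\geq\mu\}$, so the half-ball on the side $\{\sigma_F<\mu\}$ is free---is exactly what the paper does.

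Part~(2) has a genuine gap. In your case $\sigma_F(v,1)=\mu$ you write ``$p\in\partial(\sP_D)\subseteq\sP_D$,'' but this inclusion is false: Part~(1) itself produces boundary points lying on $(u_0,1)+C_F\subset C_{u_0}$, hence \emph{outside} $\sP_D$. Consequently you cannot conclude $p\notin(w,1)+C_D$ for all $w$ with $\sigma_F(w,1)\neq\mu$, and your stated goal of forcing $p\in A_{F,\mu}$ is unattainable in general. What actually happens (and what the paper does) is a further split: if $p\notin A_{F,\mu}$ then $p\in(w,1)+C_D$ for some $w$ with $\sigma_F(w,1)<\mu$; since $p\in\bar\sP_D$ one has $p\notin(w,1)+C_D^\circ$, so $p\in(w,1)+C_{F'}$ for some facet $F'$; and $F'\neq F$ because $\sigma_F(w,1)<\mu=\sigma_F(p)$ rules out $p-(w,1)\in C_F$. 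This lands $p$ in $B_{F,\mu}$, not $A_{F,\mu}$. Your case $\sigma_F(v,1)<\mu$ is correct and matches the paper.
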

	\begin{proof}
		Proof of Part $(1)$:
		Let $x\in A_{F, \mu}$, i.e., $x\in((u,1)+C_F)\setminus (\cup_{\stackrel{v\in L(P_D),} {\sigma_F(v, 1)< \mu}}C_v)$
		for some $u\in L(P_D\setminus F)$ with $\sigma_F(u,1)=\mu$. 
		Choose a small neighbourhood around $x$ of radius $\epsilon$, $B_d(x, \epsilon)\subset \R^d\setminus (\cup_{\stackrel{v\in L(P_D),} {\sigma_F(v,1)< \mu}}C_v).$ Note that if $B_d(x, \epsilon)\cap\{y\in C_D\mid\sigma_F(y)<\mu\}=\emptyset$ then $B_d(x, \epsilon)\cap C_D\subset \{\sigma_F\geq \mu\}$ which is a contradiction since $\sigma_F(x)=\mu>0$ and we have $\tilde{x}\in C_D$ with $\sigma_F(\tilde{x})<\mu$. Hence, $$\emptyset\neq B_d(x, \epsilon)\cap\{y\in C_D\mid\sigma_F(y)<\mu\}\subset C_D\setminus (\cup_{{v\in L(P_D)}}C_v)=\sP_D,$$  which gives $x\in \overline{\sP}_D.$ Since $A_{F, \mu}\cap \sP_D=\emptyset$, this implies $A_{F,\mu}\subseteq \partial(\sP_D)\cap H_{F,\mu}$.
		
		\vspace{.2cm} 
		
		\noindent Proof of Part $(2)$:  It is enough to show that $[\partial(\sP_D)\setminus \partial(C_D)]\cap H_{F, \mu}\subseteq A_{F, \mu}\cup B_{F, \mu}$. Let $x\in[\partial(\sP_D)\setminus \partial(C_D)]\cap H_{F, \mu}$. Then by Lemma $\ref{l7}(2)$, $x\in E$ where $E\in\sF(C_u)$ for some $u\in L(P_D)$. We split the proof in two cases.
		
		\noindent{\bf Case} $(1)$: Suppose $E=(u, 1)+C_F$ for some $u\in L(P_D)$. This implies $\sigma_F(u,1)=\mu$. Suppose $x\notin A_{F, \mu}$. This implies $x\in (v, 1)+C_D$ for some $v\in L(P_D)$ with $\sigma_F(v, 1)<\mu$. Since $x\in \partial(\sP_D)$, we have $x\notin (v, 1)+C_D^\circ,  i.e.,  x\in \cup_{F'\in\sF(P_D)}(v,1)+C_{F'}$. But
		$x\notin (v, 1)+C_F$ since $\sigma_{F}(v,1)<\mu =\sigma_F(x)$. Hence $x\in \big[\cup_{F\neq F'\in\sF(P_D)}(v,1)+C_{F'}\big]\cap H_{F, \mu}\cap \partial(\sP_D)\subseteq B_{F, \mu}$. 
		
		\noindent{\bf Case} $(2)$: Suppose $x\notin (u, 1)+C_F$ for all $u\in L(P_D)$. Then $E=(v, 1)+C_{F'}$ for some $v\in L(P_D)$ and $F'\in \sF(P_D)$ with $F\neq F'$. Since $x\notin (v, 1)+C_F$ we must have $\sigma_F(v, 1)<\mu$. Hence $x\in B_{F, \mu}$. This proves Part $(2)$.
	\end{proof}
	
	\begin{defn}We recall the definition of Ehrhart quasi-polynomial of a convex polytope $P\subset \R^d$. The function
		$i(P, - ):\N\longto \N$ given by 
		$$i(P, n):=\#(nP\cap \Z^d)=\sum_{j=0}^{\dim(P)}C_j(P, n)n^j,$$
		is a quasi-polynomial of degree $\dim(P)$, i.e., the coefficient $C_j(P, n)$ of $n^j$ is periodic in $n$ for all $j=0,\ldots, n$, and $C_{\dim(P)}$
		is not identically zero. Moreover $C_{\dim(P)}=\textnormal{\mbox{rVol}}_{\dim(P)}(P)$ if $A(P)\cap \Z^d\neq \emptyset$.
	\end{defn}
	
	\begin{notations}\rm\label{ndn}
		
		\begin{enumerate}
			\item In the rest of the paper, 
			for a bounded set $Q\subset \R^d$ and for $n, m\in\N$,  
			we define 
			\begin{equation}\label{dn}i(Q, n, m):= 
				\#(nQ\cap \{z = m\}\cap \mathbb{Z}^{d}),
			\end{equation}
			where $z$ is the $d^{th}$ coordinate function on $\R^d$.
			\item 
			Let $v(\sP_D) := \cup_{j=1}^s\pi(v(P_j))$, where 
			$\pi:\R^d \longto \R$ is the projection 
			given by projecting  to the last coordinate $z$ and the set 
			$\pi(v(P_j)) = \{\rho_{j_1}, \ldots, \rho_{j_{m_j}}\}$,
			$\mbox{with}~\rho_{j_1}<\rho_{j_2}<\cdots < \rho_{j_{m_j}}$. 
			\item Let $S = \{m/q\mid q = p^n, ~m, n\in \Z_{\geq 0}\}\setminus v(\sP_D)$.
		\end{enumerate}
		
	\end{notations}
	
	\begin{lemma}\label{lemmacore2}
		Let $\mu\in \Q_{>0}$. There exists a finite set $S_{F}\subset [0,\infty)$ such that for $\lambda\in S\setminus S_F$ and $q=p^n, n\in\N$, such that $q\lambda\in \Z_{\geq 0}$, 
		\begin{enumerate}
			\item there exists a constant $C_{\mu}>0$ $($independent of $\lambda\in S$ and $n\in\N$$)$ such that
			$$i([\partial(\sP_D)\cap H_{F, \mu}]\setminus A_{F, \mu}, q, q\lambda)=c_\mu(\lambda, n)q^{d-3}$$
			for some constant $c_\mu(\lambda, n)$ with $|c_\mu(\lambda, n)|<C_\mu$. 
			\item there exists a constant $C_1>0$ $($independent of $\lambda\in S\setminus S_F$ and $n\in\N$$)$ such that
			$$i(\partial_{D, F}, q, q\lambda)=i(A_{F}, q, q\lambda)+c^{(1)}_\lambda(n)q^{d-3}$$ for some constant $c^{(1)}_{\lambda}(n)$ with $|c^{(1)}_\lambda(n)|< C_1$. Here $A_F=\cup_{\mu \in \mu_{D, F}} A_{F, \mu}$ and $\partial_{D, F}$ is as in Notations $\ref{n1}(9)$.
		\end{enumerate}
	\end{lemma}
	\begin{proof}
		Proof of Part $(1)$: By Lemma $\ref{lemmacore1}(2)$, $$[\partial(\sP_D)\cap H_{F, \mu}]\setminus A_{F, \mu}\subseteq B_{F, \mu}\cup[\partial(\sP_D)\cap \partial(C_D)\cap H_{F, \mu}].$$
		Note that $B_{F, \mu}= \big[\cup_{\stackrel{v\in L(P_D), \sigma_F(v,1)< \mu}{F\neq F'\in\sF(P_D)}}(v,1)+C_{F'}\big]\cap H_{F, \mu}\cap \partial({\sP}_D)$
		\begin{eqnarray}\label{eq3.2}
			=\bigcup_{\stackrel{\substack{ \{E\in \sF(P_j)~\mid~ E\neq P_i\cap P_j\}_j \\ \{F'\in\sF(P_D)~\mid ~F'\neq F\}}}{\{v\in L(P_D)~\mid ~\sigma_F(v, 1)<\mu\}}}E\cap ((v, 1)+C_{F'})\cap H_{F, \mu}.
		\end{eqnarray}
		The second equality follows from the description of $\partial(\sP_D)$ in Lemma $\ref{l7}(1)$. Note that $$\partial(\sP_D)\cap \partial(C_D)\cap H_{F, \mu}=\cup_{\{F'\in\sF(P_D)~\mid ~F'\neq F\}}C_{F'}\cap\partial(\sP_D)\cap H_{F, \mu}.$$
		Again by Lemma $\ref{l7}(1)$,
		\begin{eqnarray}\label{eq3.3}
			\partial(\sP_D)\cap \partial(C_D)\cap H_{F, \mu}=\bigcup_{\stackrel{\{E\in \sF(P_j)~\mid ~ E\neq P_i\cap P_j\}}{\{F'\in\sF(P_D)~\mid ~F'\neq F\}}}E\cap C_{F'}\cap H_{F,\mu}.
		\end{eqnarray}
		For each convex rational polytope $Q$ appearing in the union (in the right hand side) of Equation (\ref{eq3.2}) and Equation (\ref{eq3.3}), we have $\dim(Q)\leq d-2$, since
		the facet $C_{F'}$ is transversal to $H_{F, \mu}$ for all $F\neq F'\in\sF(P_D)$. Write $B_{F, \mu}\cup[\partial(\sP_D)\cap \partial(C_D)\cap H_{F, \mu}]=\cup_{\gamma\in\Gamma} Q_\gamma$ where $\Gamma$ is a finite index set indexing the finitely many rational polytopes appearing in Equation $(\ref{eq3.2})$ and Equation $(\ref{eq3.3})$. Since $\dim(Q_\gamma)\leq d-2$, if $\dim(Q_{\gamma}\cap\{z=\lambda_{\gamma}\})=d-2$ for some $\lambda_{\gamma}\in [0, \infty)$, then $Q_{\gamma}\subset \{z=\lambda_{\gamma}\}$ \cite[Lemma 14(1)]{MT20}. Hence for atmost one $\lambda\in [0, \infty)$, we have $\dim(Q_{\gamma}\cap\{z=\lambda\})=d-2$. Let $S_{F, \mu}$ denote the (finite) set of all such $\lambda$ 's, i.e., $S_{F, \mu}:=\{\lambda\in [0,\infty)\mid \dim(Q_{\gamma}\cap\{z=\lambda\})=d-2 \text{ for some }\gamma\in\Gamma\}$.

		Now
		$$i([\partial(\sP_D)\cap H_{F, \mu}]\setminus A_{F, \mu}, q, q\lambda)\leq i(\cup_{\gamma\in \Gamma}Q_{\gamma}, q, q\lambda) $$
		\begin{equation}\label{e35}=\sum_{\gamma\in\Gamma}i(Q_{\gamma}, q, q\lambda)+\sum_{\alpha\in \Gamma'}\epsilon_{\alpha}i(Q'_{\alpha}, q, q\lambda)\end{equation}
		where $\Gamma'$ is a index set indexing the rational polytopes which are (finite) intersection of rational polytopes from the set $\{Q_\gamma\mid \gamma\in\Gamma\}$ and $\epsilon_{\alpha}\in\{-1,1\}$ depending on $\alpha\in\Gamma'$. 
		By \cite[Lemma 49]{MT20}, for all $\lambda\in S\setminus S_{F,\mu}$ and for all
		$Q=Q_{\gamma_1}\cap\cdots \cap Q_{\gamma_k}$ ($k\geq 1$), where ${\gamma_i}\in \Gamma$, there exists positive constant $C_Q$ (independent of $\lambda$ and $n$) such that
		$$i(Q, q, q\lambda)=i(Q_{\lambda}, q)={c_{Q}}_{\lambda}(n)q^{d-3}$$
		for some constant $c_{Q_{\lambda}}(n)$ with $|c_{Q_{\lambda}}(n)|<C_Q.$
		Hence the assertion in Part (1) follows from Equation (\ref{e35}).
		
		Proof of Part $(2)$: We set $S_F:=\cup_{\mu\in \mu_{D, F}}S_{F, \mu}$. The proof follows immediately from Part (1) since the set $\mu_{D, F}$ is finite.\end{proof}

	\begin{lemma}\label{ehrhartcoeff}
		Let $Q$ be the convex polytope $E\cap H_{F,\mu}$ for $\mu\in \mu_{D,F}$ and $E\in \sF(P_j)$ for some $j\in\{1,\ldots,s\}$.  Let $\lambda\in S$.
		Suppose, $q=p^n$ for some $n\in\N$ such that $q\lambda\in \Z$. Then
		\begin{enumerate}
			\item $C_{d-2}(Q_{\lambda}, q)=\textnormal{\mbox{rVol}}_{d-2}(Q_{\lambda})$.
			\item If $\dim(Q)=d-1$, then for all $j=1,\ldots, d-3$, we have $C_{j}(Q_{\lambda}, q)<\tilde{C}_Q$ for some constant $\tilde{C}_Q$ independent of $\lambda$ and $n$.
			\item \begin{enumerate}
				\item If $\dim(Q)=d-2$ and $Q$ is transversal to the $\{z=0\}$ hyperplane, or
				\item if $\dim(Q)< d-2$,
			\end{enumerate}
			then $i(Q_{\lambda},q)\leq {C}'_Qq^{d-3}$ for some constant ${C}'_Q$ independent of $\lambda$ and $n$.
		\end{enumerate}
	\end{lemma}
	\begin{proof}
		We set $m=q\lambda$. 
		
		Proof of Part (1): We know $\dim(E)=d-1$ and $E$ is transversal to the $\{z=0\}$ hyperplane. Hence $\dim(Q_{\lambda})\leq \dim(E_{\lambda})\leq d-2$. If $\dim(Q_{\lambda})=d-2$, then $\dim(Q_{\lambda})=\dim(E_{\lambda})$ and 
		$$A(qQ_{\lambda})\cap \Z^d=A(qE_{\lambda})\cap \Z^d=A(qE_{\lambda}\cap\{z=m\})\cap \Z^d\neq\emptyset,$$
		by \cite[Lemma 14(3)]{MT20}.
		Therefore, by the proof of Case (a), \cite[Lemma 33(1)]{MT20}, we have $C_{d-2}(Q_{\lambda}, q)=\textnormal{\mbox{rVol}}_{d-2}(Q_{\lambda})$. If $\dim(Q_{\lambda})<d-2$, then by the proof of Case (b), \cite[Lemma 33(1)]{MT20}, we have $C_{d-2}(Q_{\lambda}, q)=0=\textnormal{\mbox{rVol}}_{d-2}(Q_{\lambda})$.

		Proof of Part (2) follows from the proof of Part (a) of \cite[Lemma 33(2)]{MT20}. 
		
		Proof of Part (3)  follows from the proof of Part (b) of \cite[Lemma 33(2)]{MT20}.
	\end{proof}

	\begin{defn}\label{defnT_F}We define the set
		$$T_F=\cup_{\stackrel{\{E\in \sF(P_j)\mid E\neq P_i\cap P_j\}_j}{\mu\in\mu_{D, F}}}\{\lambda\in [0,\infty)\mid \dim(E\cap H_{F,\mu})=\dim((E\cap H_{F,\mu})_{\lambda})=d-2\}.$$
		Note that the set $T_F$ is finite. 
	\end{defn}
	\begin{rmk}\label{setinclusion}
		\textnormal{Recall the set $S_F$ defined in the proof of Lemma $\ref{lemmacore2}$. We remark that $S_F\subseteq v(\sP_D)\cup T_F$. To prove this we first note that $S_F=S_1\cup S_2$, where
			$$S_1=\bigcup_{\stackrel{\substack{\mu\in\mu_{D,F} \\ \{E\in \sF(P_j)~\mid~ E\neq P_i\cap P_j\}_j}}{\{F'\in\sF(P_D)~\mid ~F'\neq F\}}}\{\lambda\in [0,\infty)\mid \dim((E\cap C_{F'}\cap H_{F, \mu})_{\lambda})=d-2\}$$ and
			$$S_2=\bigcup_{\stackrel{\substack{\mu\in\mu_{D,F}, v\in L(P_D) \\ \{E\in \sF(P_j)~\mid~ E\neq P_i\cap P_j\}_j}}{ \{F'\in\sF(P_D)~\mid ~F'\neq F\}}}\{\lambda\in [0, \infty)\mid \dim((E\cap ((v, 1)+C_{F'})\cap H_{F, \mu})_{\lambda})=d-2\}.
			$$}
		\textnormal{Hence, $S_F\subseteq S_{F,1}\cup S_{F, 2}\cup T_F$ where
			$$S_{F,1}=\bigcup_{\stackrel{\substack{\mu\in\mu_{D,F} \\ \{E\in \sF(P_j)~\mid~ E\neq P_i\cap P_j\}_j}}{\{F'\in\sF(P_D)~\mid ~F'\neq F\}}}\{\lambda\in [0,\infty)\mid E\subseteq H_{F, \mu}, \dim((E\cap C_{F'})_{\lambda})=d-2\}$$ and
			$$S_{F,2}=\bigcup_{\stackrel{\substack{\mu\in\mu_{D,F}, v\in L(P_D) \\ \{E\in \sF(P_j)~\mid~ E\neq P_i\cap P_j\}_j}}{ \{F'\in\sF(P_D)~\mid ~F'\neq F\}}}\{\lambda\in [0, \infty)\mid E\subseteq H_{F, \mu},\dim((E\cap [(v, 1)+C_{F'}])_{\lambda})=d-2\}.$$ It is enough to show $S_{F,1}\cup S_{F,2}\subseteq v(\sP_D)$. Suppose $\lambda\in S_{F,1}$,  i.e., $\dim((E\cap C_{F'})_{\lambda})=d-2$ for some
			$F'\in\sF(P_D)$ with $F'\neq F$ and $E\in\sF(P_j)$ such that $E\subseteq H_{F, \mu}$ for some $\mu\in\mu_{D,F}$ and $E\neq P_i\cap P_j$ for all $i\in\{1,\ldots,s\}$. Since $\dim(C_{F'}\cap \sP_D)=d-1$, there exists
			$\tilde{E}\in\sF(P_k)$ for some $k\in\{1,\ldots,s\}$ such that $\tilde{E}\subseteq C_{F'}$. This implies $E\cap \tilde{E}\subseteq \{z=\lambda\}$, hence $\lambda\in v(\sP_D)$. }
		
		\textnormal{Now suppose $\lambda\in S_{F,2}$ and $\dim((E\cap [(v,1)+C_{F'}])_{\lambda})=d-2$ where $F'\in \sF(P_D)$ with $F'\neq F$, $v\in L(P_D)$ and $E\in \sF(P_j)$ for some $j\in\{1,\ldots,s\}$ such that $E\subseteq H_{F,\mu}$ and $E\neq P_i\cap P_j$ for all $i\in\{1,\ldots,s\}$. If $v\in L(F')$, then $\lambda\in S_{F,1}\subseteq v(\sP_D)$. Therefore, we assume $v\in L(P_D)\setminus L(F')$.
			By Lemma $\ref{boundaryinterior}$, there exists $E_1\in \sF(P_k)$ for some $k\in\{1,\ldots,s\}$ such that $E_1\subset (v, 1)+C_{F'}$. Hence $E\cap E_1\subseteq \{z=\lambda\}$, i.e., $\lambda\in v(\sP_D)$.}
	\end{rmk}
	
	\section{$\beta$-density function for $I=p_F$}
	In the rest of the paper, we assume $(X, D)$ is a projectively normal toric pair.
	\begin{lemma}\label{idealgeneration}
		The ideal $I=p_F$ is generated by the set $\{\chi^{(u, 1)}\mid u\in L(P_D\setminus F)\}$.
	\end{lemma}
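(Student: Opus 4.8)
The plan is to prove the asserted equality of ideals by comparing monomial generators. By definition $I=p_F$ is the monomial ideal of $R=K[S]$ generated by $\{\chi^{\nu}\mid \nu\in S\setminus C_F\}$, and since $C_F=C_D\cap\{\sigma_F=0\}$ is a facet of $C_D$ with $\sigma_F\geq 0$ on $C_D\supseteq S$, we have $S\setminus C_F=\{\nu\in S\mid \sigma_F(\nu)>0\}$. Write $J$ for the ideal generated by $\{\chi^{(u,1)}\mid u\in(P_D\setminus F)\cap M\}$. The inclusion $J\subseteq I$ is immediate: for $u\in (P_D\setminus F)\cap M$ the lattice point $(u,1)$ lies in $S$ (it is one of the degree-one generators of $R$, indexed by $P_D\cap M$) and $\sigma_F((u,1))>0$ because $u\notin F$, so $\chi^{(u,1)}$ is among the monomial generators of $p_F$.

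For the reverse inclusion $I\subseteq J$, which is the only point requiring an argument, take a monomial generator $\chi^{\nu}$ of $p_F$, so $\nu\in S$ with $\sigma_F(\nu)>0$ and $m:=\deg\nu\geq 1$. Since $R$ is standard graded, with $R_1$ spanned by the monomials $\chi^{(u_i,1)}$ for $u_i\in P_D\cap M$, the degree-$m$ monomial $\chi^{\nu}$ is a product of $m$ such generators; equivalently $\nu=\sum_{k=1}^{m}(u^{(k)},1)$ with each $u^{(k)}\in P_D\cap M$. The support form $\sigma_F$ is linear and nonnegative on $C_D$, so from $\sum_{k=1}^{m}\sigma_F((u^{(k)},1))=\sigma_F(\nu)>0$ we conclude $\sigma_F((u^{(j)},1))>0$ for some index $j$, that is, $u^{(j)}\in (P_D\setminus F)\cap M$. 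Then $\nu-(u^{(j)},1)=\sum_{k\neq j}(u^{(k)},1)$ is again a sum of elements of $S$, hence lies in $S$, and therefore $\chi^{\nu}=\chi^{(u^{(j)},1)}\cdot\chi^{\,\nu-(u^{(j)},1)}\in J$. This proves $p_F\subseteq J$ and, together with the first paragraph, $p_F=J$.

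The only substantive ingredient is the decomposition $\nu=\sum_{k}(u^{(k)},1)$ into degree-one pieces, which comes from the standard-graded structure of the polytopal ring $R$; everything else is just additivity and nonnegativity of $\sigma_F$ on $C_D$. I therefore do not expect a genuine obstacle in this lemma, and regard it as a bookkeeping step to be recorded before the main arguments of Section~4.
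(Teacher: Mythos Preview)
Your proof is correct and rests on the same ingredients as the paper's: write $\nu\in S$ with $\sigma_F(\nu)>0$ as a sum $\sum_k(u^{(k)},1)$ of degree-one generators, and use nonnegativity of $\sigma_F$ on $C_D$ to locate a summand with $\sigma_F>0$. The only difference is organizational: the paper argues inductively, showing that for $m\geq 2$ any $({\bf x},m)\in C_I\cap\Lambda_m$ admits some $u\in P_D\cap M$ with $({\bf x},m)-(u,1)$ still in $C_I\cap\Lambda_{m-1}$ (i.e., $I_m\subseteq R_1\cdot I_{m-1}$, so $I$ is generated in degree one), whereas you go directly, peeling off a single generator $\chi^{(u^{(j)},1)}$ with $u^{(j)}\in P_D\setminus F$ and leaving the rest in $R$. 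Your route is slightly cleaner since it avoids the case split and the induction; the paper's route has the minor byproduct of exhibiting $I_m=R_1\cdot I_{m-1}$, which is not otherwise needed.
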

	\begin{proof}
		For ${\bf x}\in\Z^{d-1}$ and any integer $m\geq 2$ with $({\bf x}, m)\in C_{I}\cap (\Lambda_m)$, it is enough to show there exists $u\in P_D\cap M$
		such that 
		$({\bf x}, m)-(u, 1)\in C_{I}\cap (\Lambda_{m-1})$. 
		Now $({\bf x}, m)=\sum_{u\in L(P_D)} a_u(u, 1)$ for $a_u\in\Z_{\geq 0}$ (since $P_D$ is a normal polytope) and $$1<m=\sum_{u\in L(P_D)} a_u=\sum_{u\in L(P_D\setminus F)}a_u+\sum_{u\in L(F)}a_u.$$
		If $\sum_{u\in L(F)}a_u\geq 1$, then choose $u_0\in L(F)$ such that $a_{u_0}\geq 1$. Since $({\bf x}, m)\in C_I$, we have $\sum_{u\in L(P_D\setminus F)}a_u>0$, hence $({\bf x}, m)-(u_0, 1)\in C_{I}\cap (\Lambda_{m-1})$.
		If $\sum_{u\in F}a_u=0$, then $\sum_{u\in L(P_D\setminus F)}a_u=m>1$. We choose $u_0\in L(P_D\setminus F)$ such that $a_{u_0}\geq 1$. Then $({\bf x}, m)-(u_0, 1)=\sum_{u_0\neq u\in L(P_D\setminus F)}a_u(u, 1)+(a_{u_0}-1)(u_0,1)$ and $\sigma_F(({\bf x}, m)-(u_0, 1))>0$, i.e., $({\bf x}, m)-(u_0, 1)\in C_I\cap(\Lambda_{m-1})$.
	\end{proof}
	
	\begin{defn}\label{defnPsiF}
		For the monomial prime ideal $I=p_F$ of $R$,
		\begin{enumerate}
			\item we define a sequence of functions $\{\psi_n:[0, \infty)\longto\R_{\geq 0}\}_{n\in \N}$ given by 
			$$\psi_n(\lambda)=\frac{1}{q^{d-2}}\ell\left(\frac{m^{[q]}\cap I}{m^{[q]}I}\right)_{\lfloor q\lambda\rfloor}.$$
			\item We define the `{\it small density function}' $\Psi_F:[0, \infty)\longto \R_{\geq 0}$, given by
			$$\Psi_F(\lambda)=\sum_{\mu\in \mu_{D, F}}\textnormal{\mbox{rVol}}_{d-2}(\partial(\sP_D)\cap H_{F,\mu}\cap\{z=\lambda\})=\textnormal{\mbox{rVol}}_{d-2}(\partial_{D, F}\cap\{z=\lambda\}).$$
			Here for $Q = \cup_iQ_i$, a finite union of
			convex rational polytopes $Q_i\subset \R^d$ with $\dim(Q_i)\leq d'$, such that
			$\dim{(Q_i\cap Q_j)}< d'$, for $Q_i\neq Q_j$,
			we define  $\textnormal{\mbox{rVol}}_{d'}Q = \sum_i\textnormal{\mbox{rVol}}_{d'}Q_i$ and
			$\textnormal{\mbox{rVol}}_{d''}Q = 0$, if $d''> d'$. For a detailed discussion of the definition of relative volume, see \cite[Appendix A, Definition $47$]{MT20}.
			
		\end{enumerate}
	\end{defn}
	\begin{rmk}\label{RmkPsi_F}Recall the set $T_F$ described in Definition \ref{defnT_F}.
		Note that for all $\lambda\in [0,\infty)\setminus T_F$,
		$$\Psi_F(\lambda)=\sum_{\mu\in \mu_{D, F}}\sum_{\{E\in \sF(P_j)\mid E\neq P_i\cap P_j\}_j}\textnormal{\mbox{rVol}}_{d-2}(E\cap H_{F,\mu}\cap\{z=\lambda\}).$$ 
	\end{rmk}
	\begin{rmk}\label{PsiCont}Suppose $Q=E\cap H_{F,\mu}$ for some $E\in \sF(P_j)$, $\mu\in \mu_{D,F}$ and suppose the function
		${\psi}_Q:[0, \infty)\longto \R_{\geq 0}$, given by
		$${\psi_Q}(\lambda)=\textnormal{\mbox{rVol}}_{d-2}(Q\cap\{z=\lambda\}).$$
		If $\dim(Q)=d-1$, then $E\subseteq H_{F, \mu}$ and $Q=E$. Therefore
		${\psi}_Q:[0, \infty)\setminus v(\sP_D)\longto \R_{\geq 0}$, given by $\lambda\longto\textnormal{\mbox{rVol}}_{d-2}(Q\cap\{z=\lambda\})$
		is continuous, by \cite[Remark 36]{MT20}. If $\dim(Q)=d-2$ and $Q$ is transversal to the $\{z=0\}$ hyperplane or $\dim(Q)\leq d-3$, then $\dim(Q_{\lambda})\leq d-3$, hence ${\psi}_Q=0$ on $[0, \infty)$. If $\dim(Q)=d-2$ and $\dim(Q\cap\{z=\lambda_0\})=d-2$ for some $\lambda_0\in[0,\infty)$, then $Q\subseteq \{z=\lambda_0\}$. Hence ${\psi}_Q(\lambda_0)=\textnormal{\mbox{rVol}}_{d-2}(Q\cap\{z=\lambda_0\})$ and ${\psi}_Q(\lambda)=0$ for all
		$\lambda\neq \lambda_0$. Hence, by Remark \ref{RmkPsi_F}, the function
		$\Psi_F:[0, \infty)\setminus (v(\sP_D)\cup T_F)\longto \R_{\geq 0}$, given by $\lambda\mapsto\textnormal{\mbox{rVol}}_{d-2}(\partial_{D,F}\cap\{z=\lambda\})$ is continuous. Moreover, $\Psi_F$ is a compactly supported and piecewise polynomial function \cite[Remark 36]{MT20}.
	\end{rmk}
	\begin{lemma}\label{lemconv}
		For all $\lambda\in[0, \infty)$
		and $q=p^n\in\N$ with $\lambda_n:=\lfloor q\lambda\rfloor /q\in S\setminus (v(\sP_D)\cup T_F)$, we have $$\psi_n(\lambda)=\Psi_F(\lambda_n)+c_\lambda(n)/q, \text{ for some constant } c_\lambda(n),$$
		such that $|c_{\lambda}(n)|<C$, where $C$ is a constant independent of $\lambda$ and $n\in\N$.
	\end{lemma}
	\begin{proof}
		For $\lambda\in \R_{\geq 0}$ and $q=p^n$, let $m=\lfloor q\lambda\rfloor$. 
		Note that $$\ell(m^{[q]}\cap I)_m=\#\left[\big((\cup_{u\in L(P_D)}q(u,1)+C_D)\setminus C_F\big)\cap(\Lambda_m)\right]$$
		$$=\#\left[(\cup_{u\in L(P_D)}q(u,1)+C_D)\setminus (C_F\cap [\cup_{u\in L(P_D)}q(u,1)+C_D])\cap(\Lambda_m)\right].$$
		By proof of Lemma \ref{lemma4.1}, we have
		\begin{eqnarray}\label{eq4.4}
			\nonumber&&\ell(m^{[q]}\cap I)_m=\#\big[(\cup_{u\in L(P_D)}q(u,1)+C_D)\setminus (\cup_{u\in L(F)}q(u,1)+C_F)\cap(\Lambda_m)\big]\\
			&&=\#\big[(\cup_{u\in L(P_D \setminus F)}q(u,1)+C_D)\cup(\cup_{u\in L(F)}q(u,1)+[C_D\setminus C_F])\cap(\Lambda_m)\big].
		\end{eqnarray}
		By Lemma \ref{idealgeneration},
		\begin{eqnarray}\label{eq4.5}
			\nonumber\ell(m^{[q]}I)_m&=&\#\big[(\cup_{u\in L(P_D), v\in L(P_D \setminus F)}q(u,1)+(v, 1)+C_D)\cap(\Lambda_m)\big]\\
			&=&\#\big[(\cup_{u\in L(P_D)}q(u,1)+[C_D\setminus C_F])\cap(\Lambda_m)\big].
		\end{eqnarray}
		The last equation follows since, $(X, D)$ is projectively normal, i.e., $P_D$ is a normal polytope.
		
		From Equation (\ref{eq4.4}) and Equation (\ref{eq4.5}), we have
		\begin{eqnarray}\label{eq4.6c}
			\nonumber&&\psi_n(\lambda)=\frac{1}{q^{d-2}}\#\big[(\cup_{u\in L(P_D \setminus F)}q(u,1)+C_F)\setminus (\cup_{u\in L(P_D)}q(u,1)+[C_D\setminus C_F])\cap(\Lambda_m)\big]\\
			\nonumber&=&\frac{1}{q^{d-2}}\#\big[\bigcup_{\mu\in\mu_{D, F}}(\cup_{\stackrel{u\in L(P_D),}{\sigma_F(u,1)=\mu}}q(u,1)+C_F)\setminus (\cup_{\stackrel{v\in L(P_D),} {\sigma_F(v,1)<\mu}}q(v,1)+C_D)\cap(\Lambda_m)\big]\\
			&=&\frac{1}{q^{d-2}}\#[qA_F\cap(\Lambda_m)\big]=\frac{1}{q^{d-2}}i(A_F, q, q\lambda_n), \text{ where } A_F \text{ is as in Lemma }\ref{lemmacore1}.
		\end{eqnarray}
		If $\lambda_n\notin S_F$, by Equation (\ref{eq4.6c}) and Lemma \ref{lemmacore2}(2), we have 
		$$\psi_n(\lambda)=i(\partial_{D,F}, q, q\lambda_n)/q^{d-2}-c^{(1)}_{\lambda_n}(n)/q$$
		such that $|c^{(1)}_{\lambda_n}(n)|<C_1$ for some constant $C_1$, independent of $\lambda$ and $n$. Hence
		
		\begin{eqnarray*}\label{eq4.7}
			&&\psi_n(\lambda)=
			\frac{1}{q^{d-2}}\#[q(\partial_{D,F})\cap(\Lambda_{q\lambda_n})\big]-\frac{c^{(1)}_{\lambda_n}(n)}{q}\\
			&&=\frac{1}{q^{d-2}}\#\big[\cup_{\stackrel{\{E\in \sF(P_j)\mid E\neq P_i\cap P_j\}_j}{\mu\in\mu_{D, F}}}q(E \cap H_{F, \mu}) \cap(\Lambda_{q\lambda_n})\big]-\frac{c^{(1)}_{\lambda_n}(n)}{q}\\
			&&=\frac{1}{q^{d-2}}\sum_{\mu\in\mu_{D,F}}\sum_{\{E\in \sF(P_j)\mid E\neq P_i\cap P_j\}_j}\#[q(E \cap H_{F, \mu}) \cap (\Lambda_{q\lambda_n})] \\
			&&+ \frac{1}{q^{d-2}}\sum_{\alpha\in J}\epsilon_\alpha\#[qK_{\alpha} \cap (\Lambda_{q\lambda_n})]-\frac{c^{(1)}_{\lambda_n}(n)}{q}
		\end{eqnarray*}
		\text{where }$J$ is the index set indexing the (finite) intersections of elements of the set $\cup_j\{E\in \sF(P_j)\mid E\neq P_i\cap P_j\}_j$, further intersecting with $H_{F, \mu}$ for some $\mu\in\mu_{D, F}$, i.e., $K_{\alpha}=E_{\alpha_1}\cap\cdots\cap E_{\alpha_l}\cap H_{F, \mu}$ \text{ for } $E_{\alpha_i}\in \cup_j\{E\in \sF(P_j)\mid E\neq P_i\cap P_j\}$ with $l\geq 2$ and for some $\mu\in\mu_{D, F}$;
		$\epsilon_\alpha\in \{1,-1\},$ \text{ depending on the} $K_{\alpha}\in J$.  Hence $\psi_n(\lambda)$
		\begin{eqnarray}\label{Psi_nDescription}
			\nonumber&=&\frac{1}{q^{d-2}}\sum_{\stackrel{\{E\in \sF(P_j)\mid E\neq P_i\cap P_j\}_j}{\mu\in\mu_{D, F}}}i(E\cap H_{F,\mu}, q, q\lambda_n) + \frac{1}{q^{d-2}}\sum_{\alpha\in J}\epsilon_\alpha i(K_{\alpha}, q, {q\lambda_n})-\frac{c^{(1)}_{\lambda_n}(n)}{q}\\
			&=&\frac{1}{q^{d-2}}\sum_{E, \mu}i((E\cap H_{F,\mu})_{\lambda_n}, q) + \frac{1}{q^{d-2}}\sum_{\alpha\in J}\epsilon_\alpha i((K_{\alpha})_{\lambda_n}, q)-\frac{c^{(1)}_{\lambda_n}(n)}{q}.
		\end{eqnarray}
		From Equation (\ref{Psi_nDescription}) and using Lemma \ref{ehrhartcoeff}, we have for all $\lambda_n\notin T_F$,
		\begin{eqnarray}\label{Psi}
			\nonumber \psi_n(\lambda)&=&\sum_{\mu\in\mu_{D,F}}\sum_{\{E\in\sF(P_j)\mid E\neq P_i\cap P_j\}_j}\textnormal{\mbox{rVol}}_{d-2}(E\cap H_{F,\mu}\cap\{z=\lambda_n\})\\&&+\frac{c^{(2)}_\lambda(n)}{q}+\frac{1}{q^{d-2}}\sum_{\alpha\in J}\epsilon_\alpha i((K_{\alpha})_{\lambda_n}, q)-\frac{c^{(1)}_{\lambda_n}(n)}{q};
		\end{eqnarray}
		for real number $c^{(2)}_{\lambda}(n)$ such that $|{c^{(2)}_{\lambda}}(n)|\leq C_2$ for some constant $C_2$ independent of $\lambda$ and $n$. Note that 
		$\dim(K_{\alpha})\leq d-2$ for all $\alpha\in J$. Suppose $K_{\alpha}=E_{\alpha_1}\cap\cdots\cap E_{\alpha_r}\cap H_{F, \mu}$ for some $\mu\in\mu_{D, F}$ and $E_{\alpha_i}\in \cup_j\{E\in \sF(P_j)\mid E\neq P_i\cap P_j\}$ with $r\geq 2$. If $\dim((K_{\alpha})_\lambda)=d-2$ for some $\lambda\in [0,\infty)$, then $\dim(E_{\alpha_1}\cap E_{\alpha_2})=\dim((E_{\alpha_1}\cap E_{\alpha_2})_{\lambda})=d-2$. Hence $E_{\alpha_1}\cap E_{\alpha_2}\subseteq \{z=\lambda\}$, i.e., $\lambda\in v(\sP_D)$.
		Therefore, for all $\lambda\in[0, \infty)$ and $q=p^n\in\N$ with $\lambda_n\in S\setminus (v(\sP_D)\cup S_{F}\cup T_F)=S\setminus (v(\sP_D)\cup T_F)$ (by Remark \ref{setinclusion}), we have $\psi_n(\lambda)=$
		\begin{eqnarray}\label{Psi}
			\nonumber &&\sum_{\mu\in\mu_{D,F}}\sum_{\{E\in\sF(P_j)\mid E\neq P_i\cap P_j\}_j}\textnormal{\mbox{rVol}}_{d-2}(E\cap H_{F,\mu}\cap\{z=\lambda_n\})+\frac{c^{(3)}_{\lambda}(n)}{q}+\frac{c^{(2)}_\lambda(n)}{q}-\frac{c^{(1)}_{\lambda_n}(n)}{q};
		\end{eqnarray}
		such that $|c^{(3)}_{\lambda}(n)|<C_3$, for some constant $C_3$, independent of $\lambda$ and $n\in \N$ \cite[Lemma 49]{MT20}.  By Remark \ref{RmkPsi_F}, for all $\lambda\in[0, \infty)$, $q=p^n\in\N$ with $\lambda_n\in S\setminus (v(\sP_D)\cup T_F)$, we have 
		\begin{eqnarray*}
			\psi_n(\lambda)&=&\sum_{\mu\in\mu_{D, F}}\textnormal{\mbox{rVol}}_{d-2}(\partial(\sP_D)\cap H_{F,\mu}\cap\{z=\lambda_n\}) +\frac{c^{(3)}_{\lambda}(n)}{q}+\frac{c^{(2)}_{\lambda}(n)}{q}-\frac{c^{(1)}_{\lambda_n}(n)}{q}.
		\end{eqnarray*}
		Hence the lemma.
	\end{proof}
	
	\begin{lemma}\label{descriptionfn}For $\lambda\in [0, \infty)$,
		$$g_n(I, {\bf m})(\lambda)=g_n(R, {\bf m})(\lambda)-f_n(R/I, {\bf m}/I)(\lambda)+\psi_n(\lambda).$$
	\end{lemma}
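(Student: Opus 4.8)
The plan is to derive the identity by comparing the three length functions at a fixed degree $m = \lfloor q\lambda \rfloor$ and then dividing through by $q^{d-2}$. First I would recall the standard short exact sequence of graded $R$-modules
\begin{equation*}
0 \longto \frac{I \cap {\bf m}^{[q]}}{{\bf m}^{[q]}I} \longto \frac{I}{{\bf m}^{[q]}I} \longto \frac{I}{I \cap {\bf m}^{[q]}} \longto 0,
\end{equation*}
which, since $I/(I \cap {\bf m}^{[q]}) \cong (I + {\bf m}^{[q]})/{\bf m}^{[q]}$, gives in each degree $m$ the additivity
\begin{equation*}
\ell(I/{\bf m}^{[q]}I)_m = \ell\bigl((I\cap {\bf m}^{[q]})/{\bf m}^{[q]}I\bigr)_m + \ell\bigl((I+{\bf m}^{[q]})/{\bf m}^{[q]}\bigr)_m.
\end{equation*}
Second, I would use the further exact sequence $0 \to (I+{\bf m}^{[q]})/{\bf m}^{[q]} \to R/{\bf m}^{[q]} \to R/(I+{\bf m}^{[q]}) \to 0$, i.e. $\ell\bigl((I+{\bf m}^{[q]})/{\bf m}^{[q]}\bigr)_m = \ell(R/{\bf m}^{[q]})_m - \ell\bigl(R/({\bf m}^{[q]}+I)\bigr)_m = \ell(R/{\bf m}^{[q]})_m - \ell\bigl((R/I)/({\bf m}/I)^{[q]}(R/I)\bigr)_m$. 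Combining the two displays yields, at the level of degree-$m$ lengths,
\begin{equation*}
\ell(I/{\bf m}^{[q]}I)_m = \ell(R/{\bf m}^{[q]})_m - \ell\bigl((R/I)/({\bf m}/I)^{[q]}(R/I)\bigr)_m + \ell\bigl(({\bf m}^{[q]}\cap I)/{\bf m}^{[q]}I\bigr)_m.
\end{equation*}

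Next I would plug $m = \lfloor q\lambda\rfloor$ and divide by $q^{d-2}$. The first term on the right becomes $g_n(R,{\bf m})(\lambda) + f_{R,{\bf m}}(\lfloor q\lambda\rfloor/q)q$ by the definition \eqref{*g} of $g_n(R,{\bf m})$; the second term is exactly $f_n(R/I, {\bf m}/I)(\lambda)$ by Remark \ref{rmkidealdensityfn} (Definition of the \mbox{HK} density function of $R/I$); and the third term is precisely $\psi_n(\lambda)$ by Definition \ref{defnPsiF}(1). On the left, by the very definition of $g_n(I,{\bf m})$ we have $\ell(I/{\bf m}^{[q]}I)_{\lfloor q\lambda\rfloor}/q^{d-2} = g_n(I,{\bf m})(\lambda) + f_{I,{\bf m}}(\lfloor q\lambda\rfloor/q)q$. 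So after division the identity reads
\begin{equation*}
g_n(I,{\bf m})(\lambda) + f_{I,{\bf m}}(\lfloor q\lambda\rfloor/q)q = g_n(R,{\bf m})(\lambda) + f_{R,{\bf m}}(\lfloor q\lambda\rfloor/q)q - f_n(R/I,{\bf m}/I)(\lambda) + \psi_n(\lambda).
\end{equation*}
Finally, Lemma \ref{sameHKd} gives $f_{I,{\bf m}}(\lambda) = f_{R,{\bf m}}(\lambda)$ for all $\lambda$, so the two terms $f_{I,{\bf m}}(\lfloor q\lambda\rfloor/q)q$ and $f_{R,{\bf m}}(\lfloor q\lambda\rfloor/q)q$ cancel exactly, leaving the asserted equation
\begin{equation*}
g_n(I,{\bf m})(\lambda) = g_n(R,{\bf m})(\lambda) - f_n(R/I,{\bf m}/I)(\lambda) + \psi_n(\lambda).
\end{equation*}

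This argument is essentially bookkeeping with short exact sequences of graded modules and the definitions already in place, so I do not anticipate a serious obstacle; the only point requiring a little care is the identification $I/(I\cap{\bf m}^{[q]}) \cong (I+{\bf m}^{[q]})/{\bf m}^{[q]}$ and the compatibility of ${\bf m}^{[q]}$ with passage to $R/I$, namely $({\bf m}^{[q]}+I)/I = ({\bf m}/I)^{[q]}$, which holds because Frobenius powers commute with quotients. One should also note that all the modules involved are finitely generated non-negatively graded, so every degree-$m$ component has finite length and the additivity of length in each degree is legitimate. With these routine checks the lemma follows.
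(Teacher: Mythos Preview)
Your proof is correct and follows essentially the same decomposition as the paper: both begin with the short exact sequence splitting $\ell(I/{\bf m}^{[q]}I)_m$ into $\ell(I/{\bf m}^{[q]}\cap I)_m + \ell\bigl(({\bf m}^{[q]}\cap I)/{\bf m}^{[q]}I\bigr)_m$, and both finish by invoking Lemma~\ref{sameHKd} to replace $f_{I,{\bf m}}$ by $f_{R,{\bf m}}$.

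The one stylistic difference is how the term $\ell(I/{\bf m}^{[q]}\cap I)_m$ is split into $\ell(R/{\bf m}^{[q]})_m - \ell\bigl((R/I)/({\bf m}/I)^{[q]}\bigr)_m$. You do this purely algebraically via the second short exact sequence $0\to (I+{\bf m}^{[q]})/{\bf m}^{[q]}\to R/{\bf m}^{[q]}\to R/(I+{\bf m}^{[q]})\to 0$; the paper instead writes $\ell(I/{\bf m}^{[q]}\cap I)_m$ as a lattice-point count $\#\bigl[(C_D\setminus C_F)\setminus\cup_u q(u,1)+C_D\bigr]\cap\Lambda_m$, splits it as $\#(q\sP_D\cap\Lambda_m)$ minus $\#\bigl[(C_F\setminus\cup_u q(u,1)+C_D)\cap\Lambda_m\bigr]$, and then recognizes these via Remark~\ref{rmkidealdensityfn}. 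Your route is cleaner and works for any graded height-one prime, whereas the paper's computation stays within the convex-geometric language used throughout. Either way the content is identical.
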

	\begin{proof}Let $m=\lfloor q\lambda\rfloor$.
		We have \begin{eqnarray}\label{eq4.6}
			\nonumber g_n(I, {\bf m})(\lambda)&=&\frac{1}{q^{d-2}}[\ell(I/{\bf m}^{[q]}I)_m-f_{I, {\bf m}}(m/q)q^{d-1}]\\
			\nonumber&=&\frac{1}{q^{d-2}}[\ell(I/{\bf m}^{[q]}\cap I)_m+\ell\big(({{\bf m}^{[q]}\cap I})/({{\bf m}^{[q]}I})\big)_m-f_{I, {\bf m}}(m/q)q^{d-1}].
		\end{eqnarray}
		Using the additive property of $\mbox{HK}$ density function \cite[Proposition 2.14]{Tri18}, we have $f_{I, {\bf m}}(\lambda)=f_{R, {\bf m}}(\lambda)$ for all $\lambda\in[0, \infty)$. Hence we have
		\begin{eqnarray}\label{eqnew}
			g_n(I, {\bf m})(\lambda)=\frac{1}{q^{d-2}}[\ell(I/{\bf m}^{[q]}\cap I)_m-f_{R, {\bf m}}(m/q)q^{d-1}]+\psi_n(\lambda).
		\end{eqnarray}
		
		Note that $$\ell(I/{\bf m}^{[q]}\cap I)_m=\#\left[\big([C_D\setminus C_F]\setminus \cup_{u\in P_D}q(u, 1)+C_D\big)\cap(\Lambda_m)\right]$$
		\begin{eqnarray}\label{g_n}\nonumber&=&\#\left[\big(C_D\setminus \cup_{u\in L(P_D)}q(u, 1)+C_D\big)\cap(\Lambda_m)\right]-\#\left[\big(C_F\setminus \cup_{u\in L(P_D)}q(u, 1)+C_D\big)\cap(\Lambda_m)\right]\\
			&=&\#(q\sP_D\cap\Lambda_m)-\#\left[\big(C_F\setminus \cup_{u\in L(P_D)}q(u, 1)+C_D\big)\cap(\Lambda_m)\right].
		\end{eqnarray}

		By Equation (\ref{eqnew}), Equation (\ref{g_n}) and Remark \ref{hkdqtring}, it follows that
		$$g_n(I, {\bf m})(\lambda)=\frac{1}{q^{d-2}}\left[\#(q\sP_D\cap\Lambda_m)-f_{R, {\bf m}}(m/q)q^{d-1}\right]-f_n(R/I, {\bf m}/I)(\lambda)+\psi_n(\lambda)$$
		$$=g_n(R, {\bf m})(\lambda)-f_n(R/I, {\bf m}/I)(\lambda)+\psi_n(\lambda).$$
	\end{proof}
	
	\noindent {\it Proof of Theorem \ref{thidealbeta}.}
	By Remark \ref{PsiCont}, the function $\Psi_F$ is compactly supported and is continuous on $[0, \infty)\setminus v(\sP_D)\cup T_F$.
	Therefore, by  \cite[Theorem 1]{Tri18} and Theorem \ref{thbeta}, the function $g_{I, {\bf m}}=g_{R, {\bf m}}+f_{R/I, {\bf m}/I}+\Psi_F$ is also continuous on $[0, \infty)\setminus v(\sP_D)\cup T_F$. We set $v_{\sP_D, F}=v(\sP_D)\cup T_F$.
	
	Following a similar argument as in \cite[Lemma 39(1)]{MT20} and using Lemma \ref{lemconv}, we see that for any compact set $V\subseteq [0, \infty)\setminus v_{\sP_D, F}$, the sequence of functions $\{\psi_n|_V\}$ converges uniformly to the function $\Psi_F$. Hence the sequence of functions $\{g_n^{(I, {\bf m})}|_V\}$ converges uniformly to the function $g_{I, {\bf m}}|_V$
	follows from Lemma \ref{descriptionfn}, \cite[Theorem 1.1]{Tri18} and Theorem \ref{thbeta}. Following a similar argument given in the proof of \cite[Corollary 3]{MT20}, we get 
	$\int_{0}^{\infty}g_{I, {\bf m}}(\lambda)d\lambda=\beta(I, {\bf m})$.
	
	\vspace{.2cm}
	
	\begin{cor}\label{corsum}With the notations as in Theorem \ref{thidealbeta}, for a projectively normal toric pair $(X, D)$, we have
		$$\sum_{\{F\mid F\text{ is facet of }P_D\}}g_{p_F, {\bf m}}(\lambda)=(r-2)g_{R, {\bf m}}(\lambda)$$
		for all $\lambda\in [0,\infty)\setminus v_{\sP_D,F}$, where $r\in \N$ is the number of facets of the polytope $P_D$.
	\end{cor}
	
	\begin{proof} From Theorem \ref{thidealbeta}, for $\lambda\in[ 0,\infty)\setminus v_{\sP_D,F}$, we have
		$$\sum_{F\in\sF(P_D)}g_{p_F, {\bf m}}(\lambda)=(r)g_{R, {\bf m}}(\lambda)-\sum_{F\in\sF(P_D)}f_{R/p_F, {\bf m}/p_F}(\lambda)+\sum_{F\in\sF(P_D)}\Psi_F(\lambda)$$
		where $r$ is the number of facets of $P_D$. Hence 
		$$\sum_{F\in\sF(P_D)}g_{p_F, {\bf m}}(\lambda)=(r)g_{R, {\bf m}}(\lambda)-\textnormal{\mbox{rVol}}_{d-2}(\partial(\sP_D)\cap\partial(C_D)\cap\{z=\lambda\})$$$$+\textnormal{\mbox{rVol}}_{d-2}([\partial(\sP_D)\setminus \partial(C_D)]\cap\{z=\lambda\})
		=(r-2)g_{R, {\bf m}}(\lambda).$$
		The last equation follows from the description of $g_{R, {\bf m}}$ in Theorem \ref{thbeta}.
	\end{proof}

	\begin{defn}
		
		For the ideal $I=p_F$, define another function $\alpha_{I, {\bf m}}:[0, \infty)\longto \R$ by setting $$\alpha_{I, {\bf m}}(\lambda)=g_{I, {\bf m}}(\lambda)-g_{R, {\bf m}}(\lambda), \text{ for }\lambda\in [0, \infty).$$
		Clearly $\int_{0}^{\infty} \alpha_{I, {\bf m}}(\lambda)=\tau_{\bf m}(I)$. Extend this to define a map $$\alpha_{{\bf m}}:\textnormal{Cl}(R)\longto \sL^{1}([0, \infty))~(\text {the space of integrable functions }f:[0,\infty)\longto \R)$$ such that it is a group homomorphism. Thus our result gives explicit description of the map $\tau_{\bf m}:\textnormal{Cl}(R)\longto \R$ defined in \cite[Theorem 1.9]{HMM04}. In the next section we compute these functions for some toric pairs.
		
	\end{defn}
	\section{some examples and properties}
	\begin{ex}
		\label{ep1}
		In this example, we compute the $\beta$-density functions for the toric 
		pair $(X,D)=(\P^1, \sO_{\P^1}(l))$ for all $l\in \N$. 
		The polytope $P_D$ can be taken to be the line segment $[0, l]$ 
		$($up to translation by integral points$)$. The cone $$C_D= \textnormal{Cone}\langle (0,1), (l, 1)\rangle =\{(x, y)\mid 0\leq x\leq ly\}\subset \R^2.$$
		Let $(R, {\bf m})$ be the associated homogeneous coordinate ring and let $I_1$ and $I_2$ be the monomial prime ideals associated to the facets $C_{F_1}=C_D\cap\{x=0\}$ and $C_{F_2}=C_D\cap\{x=ly\}$ of $C_D$, respectively. One has
		
		\begin{eqnarray*}
			g_{R, {\bf m}}(\lambda)=&
			\begin{cases} 
				1& \text{ if $ 0\leq \lambda < 1$,} \\
				-l &  \text{ if $ 1\leq \lambda < 1+\frac{1}{l}$,}\\
				0 & \text{ if $\lambda>1+\frac{1}{l},$} 
			\end{cases}
		\end{eqnarray*}
		and
		\begin{eqnarray*}
			g_{I_1, {\bf m}}(\lambda)=g_{I_2, {\bf m}}(\lambda)=0 \text{ for all } \lambda\geq 0.
		\end{eqnarray*}
	\end{ex}

	\begin{ex}
		\label{ex2}
		In this example we compute the $\beta$-density functions for the toric 
		pair $(X, D)=(\P^2, \sO_{\P^2}(1))$. 
		The polytope $P_D$ is the convex hull of the points $(0,0), (0,1)$ and $(1, 0)$ in $\R^2$ 
		$($up to translation by integral points$)$. The cone $$C_D= \textnormal{Cone}\langle (0,0,1), (0,1,1), (1,0,1)\rangle =\{(x,y,z)\mid  x, y\geq 0, x+y\leq z\}\subset \R^2.$$
		Let $(R, {\bf m})$ be the associated homogeneous coordinate ring and let $J_1, J_2$ and $J_3$ be the monomial prime ideals associated to the facets $C_{F_1}=C_D\cap\{x=0\}$, $C_{F_2}=C_D\cap\{y=0\}$ of $C_D$ and $C_{F_3}=C_D\cap\{x+y=z\}$, respectively. One has
		\begin{eqnarray*}
			g_{J_i, {\bf m}}(\lambda)=&
			\begin{cases} 
				\lambda/2& \text{ if $ 0\leq \lambda < 1$,} \\
				-\lambda+3/2&  \text{ if $ 1\leq \lambda < 2$,}\\ 
				\lambda/2-3/2 & \text{ if $2\leq \lambda<3,$}\\
				0 & \text{ if $\lambda\geq 3,$} 
			\end{cases}
		\end{eqnarray*}
		and
		\begin{eqnarray*}
			g_{R, {\bf m}}(\lambda)=3 g_{J_i, {\bf m}}(\lambda) \text{ for all } \lambda\geq 0.
		\end{eqnarray*}
	\end{ex}

	\begin{rmk}\label{rmktau}By Theorem \ref{thbeta}, for $\lambda\in[0, \infty)$,
		$$g_{R, {\bf m}}(\lambda) = 
		\textnormal{\mbox{rVol}}_{d-2}\frac{\left(\partial({\sP_D})\cap \partial(C_D)
			\cap \{z=\lambda\}\right)}{2}
		- \frac{\textnormal{\mbox{rVol}}_{d-2}\left([\partial({\sP_D})\setminus \partial(C_D)]\cap 
			\{z=\lambda\}\right)}{2}$$
		$$=\frac{1}{2}\sum_{i=1}^rf_{R/p_{F_i}, {\bf m}/p_{F_i}}(\lambda)-\frac{1}{2}\sum_{i=1}^r\Psi_{{F_i}}(\lambda),$$
		where $\{F_i\}_{i=1}^r$ are the facets of the polytope $P_D$.	\end{rmk}
	\noindent In the next example we compute the functions $f_{R/p_{F_i}, {\bf m}/p_{F_i}}$ and $\Psi_{{F_i}}$, which enables us to describe the
	$\beta$-density functions and $\tau$-density functions of the ring and of the monomial prime ideals of height one.
	\begin{ex}
		\label{ex3}
		We compute the $\beta$-density function and the $\tau$-density function of the monomial prime ideals
		of height one for the Hirzebruch surface $X=\F_a$, which is a ruled surface over $\mathbb{P}^1_K$, 
		where $K$ is a field of characteristic $p> 0$. See \cite{Ful93} for a detailed description 
		of the surface as a toric variety. The $T$-Cartier divisors are given by 
		$$D_i=V(v_i),\ i=1,2 ,3 , 4,\ \text{ where }\ v_1=e_1, v_2=e_2, v_3=-e_1+ae_2, v_4=-e_2$$ 
		and 
		$V(v_i)$  denotes the $T$-orbit closure corresponding to the cone generated by $v_i$. 
		We know the Picard group is generated by $\{D_i \mid i=1, 2,3, 4\}$ over $\mathbb{Z}$. 
		One can check $\text{Pic}(X)=\mathbb{Z}D_1\oplus\mathbb{Z}D_4$ and $D=cD_1+dD_4$ is ample if and only if 
		$c, d>0.$ Then 
		$$P_D=\{(x,y)\in M_\mathbb{R}\ |\ x\geq -c, 0\leq y\leq d, x\leq ay\}.$$ The description of the $\textnormal{\mbox{HK}}$-density function and
		the $\beta$-density function of the aasociated homogeneous coordinate ring $(R, {\bf m})$ can be found in \cite{Tri16}, \cite{MT19}
		and \cite{MT20}.
		The facets of the polytope $P_D$ are given by the hyperplanes $x=0$, $y=0$, $x=ay+c$ and $y=d$. We denote them by $F_1, F_2, F_3$ and $F_4$ respectively. By Remark \ref{rmktau}, to compute the $\beta$-density function and the $\tau$-density function, it is enough to
		compute the functions $f_{R/p_{F_i}, {\bf m}/p_{F_i}}$ and the functions $\Psi_{F_i}$ for $i=1,2,3,4$. We draw the cross section of the set $\partial(\sP_D)$ at $z=\lambda$ level for $\lambda\in [0, \infty)$ and use the interpretation of these functions in Remark \ref{rmkidealdensityfn} and Definition \ref{defnPsiF}, respectively for the computation. We have
		
		\begin{eqnarray*}
			f_{R/p_{F_1}, {\bf m}/p_{F_1}}(\lambda)&=&
			\begin{cases} 
				d\lambda & \text{ if $ 0\leq \lambda < 1,$} \\
				d(d+1-d\lambda)  &  \text{ if $ 1\leq \lambda < 1+\frac{1}{d},$}\\
				0 & \text{ if $\lambda\geq 1+\frac{1}{d},$} 
			\end{cases}
		\end{eqnarray*} 
		\begin{eqnarray*}
			f_{R/p_{F_2}, {\bf m}/p_{F_2}}(\lambda)&=&
			\begin{cases} 
				c\lambda & \text{ if $ 0\leq \lambda < 1,$} \\
				c(c+1-c\lambda)  &  \text{ if $ 1\leq \lambda < 1+\frac{1}{c},$}\\
				0 & \text{ if $\lambda\geq 1+\frac{1}{c},$} 
			\end{cases}
		\end{eqnarray*}
		\begin{eqnarray*}
			f_{R/p_{F_3}, {\bf m}/p_{F_3}}(\lambda)&=&
			\begin{cases} 
				ad\lambda & \text{ if $ 0\leq \lambda < 1,$} \\
				ad(d+1-d\lambda)  &  \text{ if $ 1\leq \lambda < 1+\frac{1}{d},$}\\
				0 & \text{ if $\lambda\geq 1+\frac{1}{d}$} 
			\end{cases}
		\end{eqnarray*}
		and
		\begin{eqnarray*}
			f_{R/p_{F_4}, {\bf m}/p_{F_4}}(\lambda)&=&
			\begin{cases} 
				(ad+c)\lambda & \text{ if $ 0\leq \lambda < 1,$} \\
				(ad+c)(1-(ad+c)(\lambda-1))  &  \text{ if $ 1\leq \lambda < 1+\frac{1}{ad+c},$}\\
				0 & \text{ if $\lambda\geq 1+\frac{1}{ad+c}.$} 
			\end{cases}
		\end{eqnarray*}
		
		To compute the functions $\Psi_{{F_i}}$ for $i=1,2,3,4$, we consider two different cases.
		\begin{enumerate}
			\item $c \geq d :$ We have
			\begin{eqnarray*}
				\Psi_{{F_1}}(\lambda)&=&
				\begin{cases} 
					0 & \text{ if $ 0\leq \lambda < 1,$}\\
					(c+\frac{ad}{2})(d+1)d(\lambda-1) &  \text{ if $ 1\leq \lambda < 1+\frac{1}{ad+c},$}\\
					(c+\frac{ad}{2})(d+1)\frac{1}{a}(c+1-c\lambda) &  \text{ if $ 1+\frac{1}{ad+c}\leq \lambda < 1 +\frac{1}{c},$}\\
					0 &  \text{ if $\lambda\geq 1 +\frac{1}{c}$,}\\
				\end{cases}
			\end{eqnarray*}
			\begin{eqnarray*}
				\Psi_{{F_2}}(\lambda)&=&
				\begin{cases} 
					0 & \text{ if $ 0\leq \lambda < 1,$} \\
					(cd+d+\frac{ad^2}{2}+\frac{ad}{2})c(\lambda-1)  &  \text{ if $ 1\leq \lambda < 1+\frac{1}{c},$}\\
					cd\lambda+\frac{ad^2}{2}+\frac{ad}{2} & \text{ if $1+\frac{1}{c}\leq \lambda<1+\frac{1}{d},$}\\
					0 & \text{ if $\lambda\geq 1+\frac{1}{d},$} 
				\end{cases}
			\end{eqnarray*}
			\begin{eqnarray*}
				\Psi_{{F_3}}(\lambda)&=&
				\begin{cases} 
					0 & \text{ if $ 0\leq \lambda < 1,$} \\
					(c+\frac{ad}{2})(d+1)ad(\lambda-1)  &  \text{ if $ 1\leq \lambda < 1+\frac{1}{ad+c},$}\\
					(c+\frac{ad}{2})(d+1)(c+1-c\lambda) &  \text{ if $ 1+\frac{1}{ad+c}\leq\lambda < 1 +\frac{1}{c},$}\\
					0 & \text{ if $\lambda\geq 1+\frac{1}{c}$} 
				\end{cases}
			\end{eqnarray*}
			
			and 
			\begin{eqnarray*}
				\Psi_{{F_4}}(\lambda)&=&
				\begin{cases} 
					0 & \text{ if $ 0\leq \lambda < 1,$} \\
					(cd+d+\frac{ad^2}{2}-\frac{ad}{2})(ad+c)(\lambda-1)  &  \text{ if $ 1\leq \lambda < 1+\frac{1}{ad+c},$}\\
					d(ad+c)(\lambda-1) + c+\frac{ad^2}{2}-\frac{ad}{2}&  \text{ if $ 1+\frac{1}{ad+c}\leq\lambda < 1 +\frac{1}{d},$}\\
					
					0 &  \text{ if $\lambda\geq 1 +\frac{1}{d}.$}\\
				\end{cases}
			\end{eqnarray*}
			
			\vspace{.5cm}
			
			\item $c \leq d :$ We have

			\begin{eqnarray*}
				\Psi_{{F_1}}(\lambda)&=&
				\begin{cases} 
					0 & \text{ if $ 0\leq \lambda < 1,$}\\
					(c+\frac{ad}{2})(d+1)d(\lambda-1) &  \text{ if $ 1\leq \lambda < 1+\frac{1}{ad+c},$}\\
					(c+\frac{ad}{2})(d+1)\frac{1}{a}(c+1-c\lambda) &  \text{ if $ 1+\frac{1}{ad+c}\leq \lambda < 1 +\frac{1}{d},$}\\
					(cd+\frac{ad^2}{2} -\frac{ad}{2})\frac{1}{a}\big(a+1-(ad+c)(\lambda-1)\big) & \text{ if $1+ \frac{1}{d}\leq \lambda < 1 +\frac{a+1}{ad+c},$}\\
					\frac{c}{a}(c+1-c\lambda) &  \text{ if $1 +\frac{a+1}{ad+c}\leq \lambda\leq 1+\frac{1}{c}$,}\\
					0 & \text{ if $\lambda\geq 1+\frac{1}{c}$,}

				\end{cases}
			\end{eqnarray*}
			\begin{eqnarray*}
				\Psi_{{F_2}}(\lambda)&=&
				\begin{cases} 
					0 & \text{ if $ 0\leq \lambda < 1,$} \\
					(cd+d+\frac{ad^2}{2}+\frac{ad}{2})c(\lambda-1)  &  \text{ if $ 1\leq \lambda < 1+\frac{1}{d},$}\\
					0 & \text{ if $\lambda\geq 1+\frac{1}{d},$} 
				\end{cases}
			\end{eqnarray*}
			\begin{eqnarray*}
				\Psi_{{F_3}}(\lambda)&=&
				\begin{cases} 
					0 & \text{ if $ 0\leq \lambda < 1,$} \\
					(c+\frac{ad}{2})(d+1)ad(\lambda-1)  &  \text{ if $ 1\leq \lambda < 1+\frac{1}{ad+c},$}\\
					(c+\frac{ad}{2})(d+1)(c+1-c\lambda) &  \text{ if $ 1+\frac{1}{ad+c}\leq\lambda < 1 +\frac{1}{d},$}\\
					
					(c+\frac{ad}{2})(d-1)\big(a+1-(ad+c)(\lambda-1)\big)\\+
					c(c+1-c\lambda) & \text{ if $1+ \frac{1}{d}\leq \lambda < 1 +\frac{a+1}{ad+c},$}\\
					c(c+1-c\lambda)&  \text{ if $1 +\frac{a+1}{ad+c}\leq \lambda < 1+\frac{1}{c}$,}\\
					0 & \text{ if $\lambda\geq 1+\frac{1}{c}$} 
				\end{cases}
			\end{eqnarray*}
			and 
			\begin{eqnarray*}
				\Psi_{{F_4}}(\lambda)&=&
				\begin{cases} 
					0 & \text{ if $ 0\leq \lambda < 1,$} \\
					(cd+d+\frac{ad^2}{2}-\frac{ad}{2})(ad+c)(\lambda-1)  &  \text{ if $ 1\leq \lambda < 1+\frac{1}{ad+c},$}\\
					d(ad+c)(\lambda-1) + c+\frac{ad^2}{2}-\frac{ad}{2}&  \text{ if $ 1+\frac{1}{ad+c}\leq\lambda < 1 +\frac{1}{d},$}\\
					
					(c+\frac{ad}{2})(d-1)\big(a+1-(ad+c)(\lambda-1)\big)
					& \text{ if $1+ \frac{1}{d}\leq \lambda < 1 +\frac{a+1}{ad+c},$}\\
					0 &  \text{ if $\lambda\geq 1 +\frac{a+1}{ad+c}.$}\\
				\end{cases}
			\end{eqnarray*}
			
		\end{enumerate}
		
	\end{ex}
	
	\begin{defn}Let $R$ be a Noetherian standard graded ring of dimension $d\geq 2$ 
		with homogeneous maximal ideal  ${\bf m}$ and let $M$ be a finitely generated non negatively graded
		$R$-module. Let 
		$\ell(M_n) = \frac{e_0(M, {\bf m})}{(d-1)!}n^{d-1} + 
		\tilde{e}_1(M, {\bf m})n^{d-2}+ \cdots + \tilde{e}_{d-1}(M, {\bf m})$
		be the Hilbert polynomial  of $(M,{\bf m})$. 
		Recall the Hilbert density function $F_M:[0,\infty)
		\longto [0, \infty)$, of $M$  as
		$$F_M(\lambda)= \frac{e_0(M, {\bf m})}{(d-1)!}\lambda^{d-1} = 
		\lim_{n\to\infty}F_n(\lambda) 
		:=\frac{1}{q^{d-1}}\ell(M_{\lfloor q\lambda\rfloor}).$$
		Similarly we define the  second Hilbert 
		density function 
		$G_M:[0,\infty)\longto \R$ as
		$$G_M(\lambda)= \tilde{e}_1(M, {\bf m})\lambda^{d-2} = 
		\lim_{n\to\infty}G_n(\lambda):=\frac{1}{q^{d-2}}
		\left(\ell(M_{\lfloor q\lambda\rfloor})-
		F_M\big(\frac{\lfloor q\lambda\rfloor}{q}\big)\right).$$
	\end{defn}

	\begin{propose}\label{p1}
		Let $(R, {\bf m})$ and $(S, {\bf n})$ be two Noetherian standard 
		graded rings over an
		algebraically closed field $K$ $($of characteristic $p>0$$)$ 
		of dimension $d\geq 2$ and $d'\geq 2$, associated to the 
		toric pairs $(X, D)$ and $(Y, D')$, resply. For the monomial prime ideal
		$p_F\# S$ of $R\# S$, we have, 
		$$G_{p_{F}\#S}-g_{p_F\#S, {\bf m}\#{\bf n}}=(G_{p_F}-g_{p_F, {\bf m}})(F_S-f_{S,{\bf n}} )
		+(G_S-g_{S, {\bf n}})(F_{p_F}-f_{p_F, {\bf m}}).$$
		
	\end{propose}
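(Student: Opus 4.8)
The plan is to compute, in each degree $m$ of the Segre grading, the length $\ell\bigl(p_F\#S/({\bf m}\#{\bf n})^{[q]}(p_F\#S)\bigr)_m$ explicitly in terms of the corresponding data on the two factors, and then to match the coefficients of $q^{d+d'-2}$ and of $q^{d+d'-3}$ in the two resulting asymptotic expansions in $q=p^n$. The only essentially new ingredient is a tensor decomposition of the relevant Frobenius power. Since $R$ and $S$ are standard graded, the ideal ${\bf m}\#{\bf n}=\bigoplus_{k\ge1}R_k\otimes_K S_k$ is generated over $R\#S$ by its degree-one part $R_1\otimes_K S_1$, so choosing $K$-bases $\{x_i\}$ of $R_1$ and $\{y_j\}$ of $S_1$ gives $({\bf m}\#{\bf n})^{[q]}=(x_i^q\otimes y_j^q\mid i,j)$, and hence in each degree $m$
$$({\bf m}\#{\bf n})^{[q]}(p_F\#S)_m=\sum_{i,j}\bigl(x_i^q(p_F)_{m-q}\bigr)\otimes_K\bigl(y_j^q S_{m-q}\bigr)=({\bf m}^{[q]}p_F)_m\otimes_K({\bf n}^{[q]})_m,$$
the last step being the elementary identity $\sum_{i,j}A_i\otimes_K B_j=(\sum_iA_i)\otimes_K(\sum_jB_j)$ for subspaces. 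Together with $(p_F\#S)_m=(p_F)_m\otimes_K S_m$, taking $\dim_K$ yields, for all $m$ and $q$,
$$\ell\bigl(p_F\#S/({\bf m}\#{\bf n})^{[q]}(p_F\#S)\bigr)_m=\ell((p_F)_m)\,\ell(S_m)-\dim_K({\bf m}^{[q]}p_F)_m\cdot\dim_K({\bf n}^{[q]})_m.$$

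Next I would insert asymptotics. For a finitely generated graded module $M$ over an $e$-dimensional standard graded toric ring $(A,{\bf a})$, and with $m=\lfloor q\lambda\rfloor$, $\lambda_n=m/q$, the Hilbert function of $M$ is eventually a polynomial (here a difference of Ehrhart polynomials of lattice polytopes), so $\ell(M_m)=F_M(\lambda_n)q^{e-1}+G_M(\lambda_n)q^{e-2}+O(q^{e-3})$, while the definitions of $f_n,g_n$ together with $f_n\to f_M$, $g_n\to g_M$ and continuity of $f_M,g_M$ at $\lambda$ give $\ell(M/{\bf a}^{[q]}M)_m=f_M(\lambda_n)q^{e-1}+g_M(\lambda_n)q^{e-2}+o(q^{e-2})$; subtracting, $\dim_K({\bf a}^{[q]}M)_m=(F_M-f_M)(\lambda_n)q^{e-1}+(G_M-g_M)(\lambda_n)q^{e-2}+o(q^{e-2})$. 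Applying this with $M=p_F$ over $(R,{\bf m})$ (so $e=d$) and with $M=S$ over $(S,{\bf n})$ (so $e=d'$), multiplying the two expansions, and substituting into the displayed formula — expanding $\ell((p_F)_m)\ell(S_m)$ the same way — produces an asymptotic expansion of $\ell\bigl(p_F\#S/({\bf m}\#{\bf n})^{[q]}(p_F\#S)\bigr)_m$ in powers of $q$ whose coefficients of $q^{d+d'-2}$ and $q^{d+d'-3}$ are explicit polynomial expressions in $F_{p_F},f_{p_F},G_{p_F},g_{p_F}$ and $F_S,f_S,G_S,g_S$ evaluated at $\lambda_n$, all remaining cross terms being $o(q^{d+d'-3})$.

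On the other side, $\dim(R\#S)=d+d'-1$, and $p_F\#S$ is the monomial height-one prime of the Segre toric pair, whose polytope $P_D\times P_{D'}$ is again normal; so Theorem~\ref{thidealbeta} applies and gives $\ell\bigl(p_F\#S/({\bf m}\#{\bf n})^{[q]}(p_F\#S)\bigr)_m=f_{p_F\#S}(\lambda_n)q^{d+d'-2}+g_{p_F\#S}(\lambda_n)q^{d+d'-3}+o(q^{d+d'-3})$, while the identity $\ell((p_F\#S)_m)=\ell((p_F)_m)\ell(S_m)$ forces $F_{p_F\#S}=F_{p_F}F_S$ and $G_{p_F\#S}=F_{p_F}G_S+G_{p_F}F_S$. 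Equating the $q^{d+d'-2}$-coefficients identifies $f_{p_F\#S}$ (which already recovers the Segre formula for the HK density function), and equating the $q^{d+d'-3}$-coefficients identifies $g_{p_F\#S}$. Substituting $G_{p_F\#S}=F_{p_F}G_S+G_{p_F}F_S$ into $G_{p_F\#S}-g_{p_F\#S}$ and simplifying, the expression collapses to exactly $(G_{p_F}-g_{p_F})(F_S-f_S)+(F_{p_F}-f_{p_F})(G_S-g_S)$; since $\{m/q\}$ is dense in $[0,\infty)$ and every function involved is continuous off a finite set, this gives the asserted equality of functions on all of $[0,\infty)$.

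I expect the main obstacle to be the error control in the last two paragraphs, carried out simultaneously for $R$, $S$, and $R\#S$: one must check that the finite discontinuity sets of the $\beta$-density functions of $p_F$, of $S$, and of $p_F\#S$ combine into a single finite set off which the argument runs; that replacing $\lambda$ by $\lambda_n=\lfloor q\lambda\rfloor/q$ costs only $O(1/q)$ uniformly on compacta; and that every cross term produced by multiplying the two factor expansions — a leading term times an error term, or an error times an error — is genuinely $o(q^{d+d'-3})$ rather than merely $o(q^{d+d'-2})$. The quantitative convergence estimates already available in \cite{T2} and \cite{MT2} and in the lemmas of Section~4 supply all of these bounds, but they must be assembled with some care. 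The tensor decomposition in the first paragraph, by contrast, is short, and is really the engine behind the multiplicativity.
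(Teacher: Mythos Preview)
Your proposal is correct and is precisely the argument the paper has in mind: the paper's own proof consists of a single line referring to \cite[Proposition~44]{MT2}, whose proof is exactly the tensor decomposition $[({\bf m}\#{\bf n})^{[q]}(p_F\#S)]_m=({\bf m}^{[q]}p_F)_m\otimes_K({\bf n}^{[q]})_m$ followed by multiplying the two factor expansions and matching the $q^{d+d'-2}$- and $q^{d+d'-3}$-coefficients, with the error control coming from the $O(1/q)$ bounds established in \cite{MT2} and in Section~4 here. Your identification of the delicate point (uniform control of the cross terms off a finite exceptional set) is also accurate, and the bounds you cite are the right ones.
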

	\begin{proof}
		The proof follows by a similar argument used to prove \cite[Proposition 44]{MT20}.
	\end{proof}
	\begin{rmk}
		With notations as above, using Proposition \ref{p1}, \cite[Proposition 44]{MT20} and \cite[Remark 43]{MT20}, one gets
		$$\alpha_{p_{F}\#S}=\alpha_{p_F, {\bf m}}(F_S-f_{S,{\bf n}} )
		+(G_R-G_{p_F})f_{p_F, {\bf m}}.$$
		This gives a complete description of the $\beta$-density function and the $\tau$-density function for Segre product of toric pairs.
	\end{rmk}
	\begin{ex}
		Let $\M$ be a $2\times 3$ matrix whose entries are the independent variables $x_1,\cdots, x_6$ and let $T$ be the quotient of the ring $k[x_1,\cdots, x_6]$ by the ideal $I_2(\M)$, generated by $2\times 2$ minors of $\M$. In their paper, Huneke, McDermott and Monsky have referred to this example by K. Watanabe where $\beta(T, {\bf m}_T)=-1/4$ to show that the map $\tau:\textnormal{Cl}(T)\longto \R$ is not necessarily a zero map. In this example we compute the map $\tau:=\tau_{{\bf m}_T}$ for all height one monomial prime ideals. Let $(R, {\bf m}_R)$ and $(S, {\bf m}_S)$ be the homogeneous coordinate ring for the toric pairs $(\P^1, \sO_{\P^1}(1))$ and $(\P^2, \sO_{\P^2}(1))$ respectively. Let $I_1, I_2\subseteq R$ and $J_1,J_2, J_3\subseteq S$ be the monomial prime ideals of height one of $R$ and $S$ respectively. For the monomial prime ideals of height one $I_i\#S$ and $R\#J_j$ of $R\#S$, we compute the $\beta$-density function with respect to the homogeneous maximal ideal ${\bf m}_T={\bf m}_R\#{\bf m}_S$. We have
		\begin{eqnarray*}
			\beta_{R\#S, {\bf m}_T}(\lambda)&=&
			\begin{cases} 
				2\lambda^2 & \text{ if $ 0\leq \lambda < 1,$} \\
				2\lambda^2-12(\lambda-1)^2  &  \text{ if $ 1\leq \lambda < 2,$}\\
				2\lambda^2-\frac{15}{2}\lambda +\frac{9}{2}& \text{ if $2\leq \lambda < 3,$}\\
				0 & \text{ if $\lambda\geq 3,$} 
			\end{cases}
		\end{eqnarray*}
		\begin{eqnarray*}
			\beta_{I_i\#S, {\bf m}_T}(\lambda)&=&
			\begin{cases} 
				\frac{3}{2}\lambda^2 & \text{ if $ 0\leq \lambda < 1,$} \\
				\frac{3}{2}\lambda^2-9(\lambda-1)^2  &  \text{ if $ 1\leq \lambda < 2,$}\\
				\frac{3}{2}\lambda^2-\frac{9}{2}\lambda & \text{ if $2\leq \lambda < 3,$}\\
				0 & \text{ if $\lambda\geq 3,$} 
			\end{cases}
		\end{eqnarray*}
		and 
		\begin{eqnarray*}
			\beta_{R\#J_j, {\bf m}_T}(\lambda)&=&
			\begin{cases} 
				\lambda^2 & \text{ if $ 0\leq \lambda < 1,$} \\
				\lambda^2-6(\lambda-1)^2  &  \text{ if $ 1\leq \lambda < 2,$}\\
				\lambda^2-\frac{9}{2}(\lambda-1) & \text{ if $2\leq \lambda < 3,$}\\
				0 & \text{ if $\lambda\geq 3.$} 
			\end{cases}
		\end{eqnarray*}
		Hence $\tau(I_i\#S, {\bf m}_T)=-\frac{1}{2}$ for $i=1,2$ and $\tau(R\#J_j, {\bf m}_T)=\frac{1}{2}$ for $j=1,2,3$.
	\end{ex}

\end{document}